\newtheorem{thm}{Theorem}[section]
\newtheorem{prop}[thm]{Proposition}
\theoremstyle{remark}
\theoremstyle{definition}
\newtheorem{ex}{Example}[section]
\newtheorem{defn}{Definition}[section]
\newcommand{\Endex}{\hfill $\diamondsuit$}
\newcommand{\R}{\mathbb{R}}
\renewcommand{\S}{\mathbb{S}}
\newcommand{\posOrth}{\R^n_+}
\newcommand{\posOrthP}{\R^2_+}
\newcommand{\seq}[3]{\left\{#1\right\}_{#2}^{#3}}
\newcommand{\goaway}[1]{}
\newcommand{\tup}[1]{\left\langle #1 \right\rangle}
\newcommand{\smsetminus}{\stackrel{\smallsetminus}{\phantom{\_}}}
\newcommand{\myNorm}[1]{\left\|{#1}\right\|}
\newcommand{\soln}{\seq{x_n}{n=-k}{\infty}}
\newcommand{\xbar}{\bar{x}}
\newcommand{\Xbar}{\bar{\calX}}
\newcommand{\calX}{\mathcal{X}}
\newcommand{\Pbox}{P_\Box}
\renewcommand{\deg}{\mathrm{deg}}
\newcommand{\posCoeffs}{\textbf{PosCoeffs} }
\newcommand{\subPoly}{\textbf{SubPoly} }
\newcommand{\lCoeff}{\textbf{LCoeff} }
\newcommand{\Const}{\textbf{Const} }
\title{A New Algorithm for Proving Global Asymptotic Stability of Rational Difference Equations}
\author{Emilie Hogan and Doron Zeilberger}
\begin{document}

\maketitle

\abstract{
Global asymptotic stability of rational difference equations is an area of research that
has been well studied. In contrast to the many current methods for proving global
asymptotic stability, we propose an algorithmic approach. The algorithm we summarize here
employs the idea of contractions. Given a particular rational difference equation,
defined by a function $Q: \R^{k+1} \rightarrow \R^{k+1}$, we attempt to find a $K$ value
for which $Q^K$ shrinks distances to the difference equation's equilibrium point. We
state some general results that our algorithm has been able to prove, and also mention
the implementation of our algorithm using Maple.}

\section{Introduction}

In this paper we will introduce an algorithmic approach to proving \emph{global
asymptotic stability (GAS)} of equilibrium points of rational difference equations. The
field of rational difference equations has applications to many other fields including
biology, economics, and dynamical systems. In application areas, one often studies
time-evolving sequences produced by recurrences with the goal of discovering end behavior
of the sequence, given some initial conditions. There are many types of end behavior that
one may be interested in. We will be concerned only with global asymptotic stability.
Essentially, given a fixed rational difference equation, when the sequence that it
produces converges for \emph{any} reasonable initial conditions, we say that it is GAS.
In Section \ref{DefnsGAS}, we will state the precise definition for GAS, as well as
introduce all of the other definitions necessary to study stability of difference
equations. We will also state a theorem, originally proved in \cite{KrNNeT1999}, which
will be the basis of our algorithm. The main algorithm is presented in Sections
\ref{GAStoPoly} and \ref{PosMethods}. In the algorithm we first reduce the problem of GAS
to the problem of proving that a particular polynomial is positive. Then, we prove that a
multivariate polynomial is positive (when all of its variables are taken to be positive)
using our new algorithm. Next, Section \ref{ProofOfConcept} contains a proof-of-concept
that our algorithm is indeed applicable to prove GAS. In Section \ref{GASresults} we
state a few of the results that our algorithm can prove. In addition, in Section
\ref{MapleCodeGAS}, we mention the most useful commands in the Maple package that
accompanies this paper.

%\section{Introduction to Stability of Rational Difference Equations}\label{IntroGAS}
\subsection{Definitions}\label{DefnsGAS} Following the various works of Ladas, et. al.
\cite{CaELaG2008,KoVLaG1993,KuMLaG2001}, we begin by stating a few standard definitions
needed to study rational difference equations and stability.
\begin{defn}\label{DEdefn}
A \emph{rational difference equation} (of \emph{order} $k+1$) is an equation of the form
\begin{align}\label{DEform}
x_{n+1} &= R(x_{n},x_{n-1},\ldots,x_{n-k})
\end{align}
where the function $R(u_0,u_1,\ldots,u_k)$ is a rational function which maps $I^{k+1}$ to
$I$, for some interval $I \subseteq \R$. Typically, we will take $I$ to be $[0,\infty)$
or $(0,\infty)$.
\end{defn}

Given a function $R$ we say that a \emph{solution} of \eqref{DEform} is a sequence
$\seq{x_n}{n=-k}{\infty}$ which satisfies \eqref{DEform}. One can also think of a
solution, $\seq{x_n}{n=-k}{\infty}$, as being associated to the specific initial
conditions $\{x_{-k},\ldots,x_{0}\}$ created by repeatedly applying  $R$. If a solution
is constant, $x_n=\xbar$, for all $n\geq -k$ then we say that the solution is an
\emph{equilibrium solution}, and $\xbar$ is called an \emph{equilibrium point}, or simply
an \emph{equilibrium} of $F$. In practice, we find the equilibria by solving the equation
$\xbar = R(\xbar,\ldots,\xbar)$, and taking the solutions which lie in the interval $I$.

The main topic to be investigated in this paper is end behavior, specifically
\emph{stability}, of a solution of a given difference equation. There are various notions
of stability that will now be defined.

\begin{defn}\label{StabilityDefn}
An equilibrium point, $\xbar$, of \eqref{DEform} is said to be
\begin{enumerate}
\item \emph{locally stable} if for every $\varepsilon>0$ there exists $\delta>0$ such
    that if $\soln$ is a solution to \eqref{DEform} with the property that
    \[|x_{-k}-\xbar|+|x_{-k+1}-\xbar|+\cdots+|x_0-\xbar| < \delta\]
    then $|x_n-\xbar|<\varepsilon$ for all $n\geq 0$.
\item \emph{locally asymptotically stable (LAS)} if $\xbar$ is locally stable, and if
    there exists a $\gamma>0$ such that if $\soln$ is a solution to \eqref{DEform}
    with the property that
    \[|x_{-k}-\xbar|+|x_{-k+1}-\xbar|+\cdots+|x_0-\xbar| < \gamma\]
    then
    \[\lim_{n\rightarrow\infty} x_{n} = \xbar\]
\item a \emph{global attractor} if for every solution, $\soln$, of \eqref{DEform} we
    have
    \[\lim_{n\rightarrow\infty} x_{n} = \xbar\]
\item \emph{globally asymptotically stable (GAS)} if $\xbar$ is a global attractor,
    and $\xbar$ is locally stable.
\item \emph{unstable} if $\xbar$ is not locally stable.
\end{enumerate}
\end{defn}

Our goal in this paper is to present an algorithm to prove GAS. Since GAS implies LAS,
the first step must be to prove LAS (since, if a difference equation is not LAS it can't
be GAS). The \emph{linearized stability theorem}, which provides easily verifiable
criteria for local asymptotic stability, can be found in many books and papers
\cite{CaELaG2008, ElS2000, HaJKoH1991, KuMLaG2001, MaM1999}. Because it is not central to
our algorithm, we will omit the theorem and notation needed to state it.

In contrast to local asymptotic stability which is relatively easy to verify using the
linearized stability theorem, global asymptotic stability has no similarly general
necessary and sufficient conditions. There are a handful of theorems, providing
sufficient conditions, that have been used to verify the global asymptotic stability of
many specific difference equations. However, given a difference equation defined by the
function $R$, it is not always obvious which theorem to apply. For a discussion of many
of these theorems see \cite{CaELaG2008}.

Our algorithm will only rely on the following theorem which is first presented in a paper
by Kruse and Nesemann \cite{KrNNeT1999}. It will be stated it in a slightly different
manner than it appears in their paper, using the notation we have established in this
paper. First, it will be necessary to consider the difference equation associated to a
function $R$ in vector form. Let $Q:I^{k+1}\rightarrow I^{k+1}$ be defined from $R$ as
\begin{align}\label{GvectMap}
Q(\calX_{n}) = Q\left(\left[\begin{array}{c}
                             x_{n}\\
                             x_{n-1}\\
                             \vdots\\
                             x_{n-k}
                             \end{array}\right]\right) = \left[\begin{array}{c}
                                                         R(x_{n},\ldots,x_{n-k})\\
                                                         x_{n}\\
                                                         \vdots\\
                                                         x_{n-k+1}
                                                         \end{array}\right] = \calX_{n+1}.
\end{align}
Note that this transformation from $R$ to $Q$ essentially creates an order 1 mapping out
of an order $k+1$ mapping. In addition, $Q$ is now a map that can be composed with
itself, so
\[\calX_{n} = Q^n(\calX_0)\]
where $\calX_0=\tup{x_0,\ldots,x_{-k}}$ is the vector of initial conditions. Now we can
state the theorem.

\begin{thm}[Kruse, Nesemann 1999]\label{contrGAS}
Let $\myNorm{\cdot}$ denote the Euclidean norm (i.e., $\myNorm{\tup{a,b}} =
\sqrt{a^2+b^2}$). Let $\S$ denote either $[0,\infty)$ or $(0,\infty)$ (the function $Q$
will necessitate which). Let $Q : \S^{k+1} \rightarrow \S^{k+1}$ be a continuous mapping
of the form \eqref{GvectMap} with a unique fixed point $\Xbar \in \S^{k+1}$. Suppose for
the discrete dynamic system
\begin{align}\label{vectRecur}
\calX_{n+1} = Q(\calX_{n}), \quad n=0,1,2,\ldots
\end{align}
there exists an integer $K \geq 1$ such that the $K^{th}$ iterate of $Q$ satisfies
\begin{align}\label{contrCriteria}
\myNorm{Q^K(\calX) - \Xbar} < \myNorm{\calX-\Xbar} \quad \text{for all } \calX \in \S^{k+1}, \calX \neq \Xbar.
\end{align}
Then $\Xbar$ is GAS with respect to the norm $\myNorm{\cdot}$.
\end{thm}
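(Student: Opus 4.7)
The plan is to separately establish the two ingredients of GAS---local stability and global attraction---by exploiting the strict contraction \eqref{contrCriteria} in tandem with continuity of $Q$ at $\Xbar$. Writing $F := Q^K$, I would first analyze the sampled subsequence $\calY_m := F^m(\calX_0) = \calX_{mK}$, and at the end recover the intermediate iterates $\calX_{mK+j}$ for $0 < j < K$ via continuity of $Q^j$ at the fixed point.

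For the global attractor property, fix $\calX_0 \neq \Xbar$ (the case $\calX_0 = \Xbar$ is trivial) and assume $\calY_m \neq \Xbar$ for all $m$ (otherwise the orbit reaches $\Xbar$ in finitely many steps). Then $a_m := \myNorm{\calY_m - \Xbar}$ is strictly decreasing by \eqref{contrCriteria} and bounded below by $0$, so it converges to some $L \geq 0$. To show $L = 0$, I would argue by compactness: the orbit $\{\calY_m\}$ sits inside the closed Euclidean ball of radius $a_0$ around $\Xbar$, so Bolzano--Weierstrass yields a subsequence $\calY_{m_i} \to \calZ$ with $\myNorm{\calZ - \Xbar} = L$. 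Continuity of $F$ gives $\calY_{m_i+1} = F(\calY_{m_i}) \to F(\calZ)$, so $\myNorm{F(\calZ) - \Xbar} = L$ as well. If $\calZ \neq \Xbar$, \eqref{contrCriteria} would force $\myNorm{F(\calZ) - \Xbar} < L$, a contradiction. Hence $\calZ = \Xbar$ and $L = 0$, so $\calY_m \to \Xbar$. Writing any $n$ as $n = mK + j$ with $0 \leq j < K$ and using continuity of $Q^j$ at $\Xbar$ then upgrades this to $\calX_n = Q^j(\calY_m) \to Q^j(\Xbar) = \Xbar$.

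For local stability, given $\varepsilon > 0$, I would use continuity of $Q^0, Q^1, \ldots, Q^{K-1}$ at $\Xbar$ to obtain $\delta_0, \ldots, \delta_{K-1} > 0$ with $\myNorm{\calX - \Xbar} < \delta_j \Rightarrow \myNorm{Q^j(\calX) - \Xbar} < \varepsilon$, and set $\delta := \min_j \delta_j$. Any initial condition with $\myNorm{\calX_0 - \Xbar} < \delta$ then satisfies $\myNorm{\calY_m - \Xbar} < \delta$ for all $m \geq 0$ by repeated application of \eqref{contrCriteria}, and the choice of $\delta_j$ forces $\myNorm{\calX_{mK+j} - \Xbar} = \myNorm{Q^j(\calY_m) - \Xbar} < \varepsilon$.

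The main obstacle I anticipate is the compactness step in the attractor argument: \eqref{contrCriteria} is only a \emph{strict} pointwise inequality, not a uniform contraction with Lipschitz constant bounded away from $1$, so Banach's fixed point theorem is not directly available and convergence must be teased out via subsequential limits. A secondary subtlety is ensuring that the limit $\calZ$ lies in $\S^{k+1}$ so that $F(\calZ)$ is defined and \eqref{contrCriteria} applies. This is automatic when $\S = [0,\infty)$, since $\S^{k+1}$ is closed, but when $\S = (0,\infty)$ one must rule out boundary limits, presumably by checking that $Q$ extends continuously (with its strict contraction preserved) to the closure in the cases of interest.
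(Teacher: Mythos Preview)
The paper does not actually prove this theorem: immediately after the statement it says ``The proof of this theorem can be found in \cite{KrNNeT1999}'' and moves on. So there is no in-paper argument to compare against; the authors treat this as a black-box citation.

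That said, your outline is the standard Lyapunov/LaSalle-style argument for this result and is essentially correct. Working with $F=Q^{K}$, the monotone decrease of $a_{m}=\myNorm{\calY_{m}-\Xbar}$ together with a compactness/subsequence argument is exactly how one forces $L=0$ in the absence of a uniform contraction constant; once $a_{m}\to 0$ is established for the sampled sequence, continuity of the finitely many intermediate iterates $Q^{0},\ldots,Q^{K-1}$ at $\Xbar$ handles both the remaining indices for global attraction and the $\varepsilon$--$\delta$ bookkeeping for local stability. The one genuine loose end you flag yourself---that when $\S=(0,\infty)$ a subsequential limit of $\{\calY_{m}\}$ might land on the boundary $\partial\S^{k+1}$, where $Q$ and hence \eqref{contrCriteria} are not a priori defined---is real and is precisely the sort of technicality the cited paper \cite{KrNNeT1999} addresses; it is not something you can resolve from the hypotheses as stated here without either assuming $\S=[0,\infty)$ or invoking additional structure of the specific $Q$.
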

First, notice that this integer $K$ tells us which power of $Q$ is a contraction with
respect to $\Xbar$, i.e., $Q^K$ shrinks distances to $\Xbar$. This gives an intuitive
reason for $\myNorm{Q^K(\calX) - \Xbar} < \myNorm{\calX-\Xbar}$ to imply global
asymptotic stability. The proof of this theorem can be found in \cite{KrNNeT1999}.

Notice that the various definitions of stability, as they were stated in Definition
\ref{StabilityDefn}, do not quite apply here because our unique fixed point (or
equilibrium) is a vector rather than a scalar. However, Definition \ref{StabilityDefn},
can be easily translated to the vector case. The recurrence is \eqref{vectRecur}, the
equilibrium is a vector solution to the equation $Q(\Xbar)=\Xbar$, and the order of the
recurrence, $k+1$, is 1 (so $k=0$). Other than these minor changes, a word for word
translation of Definition \ref{StabilityDefn} is what we mean by $\Xbar$ being GAS in
Theorem \ref{contrGAS}.

Next we will see how we utilized this theorem to create a global asymptotic stability
proof algorithm.

\section{From Global Asymptotic Stability to Polynomial Positivity}\label{GAStoPoly} In
this section we will see how to reduce the question of global asymptotic stability of a
rational difference equation to a question about an associated polynomial being positive.
Throughout this section assume that we have fixed a rational difference equation,
\begin{align}\label{theRDE}
x_{n+1} = R(x_n,\ldots,x_{n-k}),
\end{align}
of order $k+1$, with a unique equilibrium $\xbar$. Also assume that $R$ is a rational
function with positive coefficients, so $R: [0,\infty)^{k+1} \rightarrow [0,\infty)$, and
$\xbar$ is non-negative (if there is no constant term in the denominator of $R$ we cannot
allow 0 to be in the domain, so $R:(0,\infty)^{k+1} \rightarrow (0,\infty)$, and $\xbar$
must be strictly positive). In order to apply Theorem \ref{contrGAS} we must think of
\eqref{theRDE} and its equilibrium in their vector forms. For example, if $x_{n+1} =
\frac{4+x_n}{1+x_{n-1}}$ then
\begin{align*}
Q\left(\left[\begin{array}{c}
               x_n\\
               x_{n-1}
               \end{array}\right]\right) &= \left[\begin{array}{c}
                                        \frac{4+x_n}{1+x_{n-1}}\\
                                        x_{n}
                                        \end{array}\right],
\end{align*}
and $\Xbar = \tup{2,2}$. In this case $k=1$, so $R:[0,\infty)^2 \rightarrow [0,\infty)$,
and $Q:[0,\infty)^2\rightarrow [0,\infty)^2$.

The goal will be to find a positive integer, $K$, which satisfies
\eqref{contrCriteria}\goaway{, i.e., $\myNorm{Q^K(\calX)-\Xbar} < \myNorm{\calX-\Xbar}$}.
Motivated by this goal, we will construct the following polynomial, given specific $Q$,
$\Xbar$, and $K$ (assume we have conjectured some value for $K$):
\begin{align}\label{polyToPos}
P_{Q,\Xbar,K}(\calX) = \text{numerator}\left( \myNorm{\calX-\Xbar}^2 - \myNorm{Q^K(\calX) - \Xbar}^2 \right).
\end{align}
Consider the implication of $P_{Q,\Xbar,K}>0$ for $\calX \geq 0$ (or $>0$, both
componentwise), and $\calX \neq \Xbar$.
\begin{align}
               &\quad 0 < \text{numerator}\left( \myNorm{\calX-\Xbar}^2 - \myNorm{Q^K(\calX) - \Xbar}^2 \right)\notag\\
\Longleftrightarrow&\quad 0 <\myNorm{\calX-\Xbar}^2 - \myNorm{Q^K(\calX) - \Xbar}^2\notag\\
\Longleftrightarrow&\quad \myNorm{Q^K(\calX) - \Xbar}^2 < \myNorm{\calX-\Xbar}^2\notag\\
\Longleftrightarrow&\quad \myNorm{Q^K(\calX) - \Xbar} < \myNorm{\calX-\Xbar}.\label{contrCriteria2}
\end{align}
Of course, the first implication, undoing the numerator from line 1 to line 2, in general
will not preserve an inequality since the denominator may be negative. However, because
we are squaring the Euclidean norm, the common denominator is always a product of sums of
squares. Taking the numerator is then equivalent to multiplying both sides by the
denominator, a positive quantity, which will not change the direction of the inequality.
Notice that the final implicant, \eqref{contrCriteria2}, is simply \eqref{contrCriteria},
so proving $P_{Q,\Xbar,K}>0$ for some $K$ implies that $\xbar$ is GAS for the rational
difference equation $R$. An algorithm for proving positivity will be shown in Section
\ref{PosMethods}. Also note that whenever the function $Q$ and equilibrium are clear from
context, they will be omitted from the subscript of $P$.

For a given $Q$ and $\xbar$ we know that showing positivity of an associated polynomial
implies GAS of $\xbar$ for $R$. We also know, given $K$, what that polynomial associated
to $Q$ and $\Xbar$ is. However, we still need to see how to conjecture a reasonable value
for $K$, and then how to prove that the polynomial is indeed positive. We will see how to
prove positivity in the next section. Now let's see how to conjecture a reasonable $K$
value given $R$ and $\xbar$ using a brute force method. Start with $K=1$ and apply the
following algorithm:
\begin{enumerate}
\item \label{MakePoly} Create the polynomial $P_{Q,\Xbar,K}(\calX)$
\item \label{MinTechnique} Apply a minimization technique to the polynomial
    $P_{Q,\Xbar,K}(\calX)$ (e.g., simulated annealing, gradient descent,
    Metropolis-Hastings algorithm, etc.) many times to find approximate local minima
    of $P_{Q,\Xbar,K}$.
\item
\begin{enumerate}
\item \label{FoundGoodK} If all approximate local minima are positive then
    conjecture that this $K$ works.
\item \label{KNotGood}If there is a negative minima then increment $K$ by 1 and
    go back to step 1.
\end{enumerate}
\end{enumerate}
For ease of computation, and since this is only to conjecture a $K$, we apply the
minimization technique in step \ref{MinTechnique} to a discrete set of points. We will
restrict to a fine mesh with large upper bound. For example, the cartesian product
$\bigtimes_{i=1}^{k+1} \left\{\varepsilon,2\varepsilon,\ldots,N\varepsilon\right\}$, for
some large value of $N$ and small value of $\varepsilon$. Then every point in the mesh is
a vector of the form $\tup{i_1\varepsilon, i_2\varepsilon,\ldots,i_{k+1}\varepsilon}$,
where $1 \leq i_j \leq N$.

Note that this is not the only possible algorithm for conjecturing a value for $K$.
However, the main result in this paper is a positivity algorithm, so we will not consider
other possible algorithms. One could, in theory, replace step 2 with the following,
\emph{``Apply the polynomial positivity algorithm found in Section \ref{PosMethods}"}.
Then step 3 would become, \emph{``If the algorithm in step 2 fails, increase $K$ by 1 and
go back to step 1, otherwise return $K$"}. Using this positivity algorithm, once a $K$
value is found, it is also proved to be correct. However, using positivity in step 2 is
sometimes not feasible since it often takes more computer memory than the conjecturing
algorithm.

\section{An Algorithm to Prove Positivity of a Multivariate
Polynomial}\label{PosMethods} So far, our algorithm to prove global asymptotic stability
of a particular rational difference equation has reduced the problem to proving that an
associated polynomial is positive. Now the question becomes, how does one prove
positivity? In general one can show polynomial positivity using calculus, or using
cylindrical algebraic decomposition \cite{ArDCoGMcS1984}. However, both of these methods
do not work particularly well when the polynomial has very high degree. This is typically
the case for the polynomials produced in Section \ref{GAStoPoly}, so we must use a
different algorithm. We propose a new algorithm which was inspired by the following
definition and theorem found in \cite{HoHDaJ1998}.
\begin{defn}
The polynomial $P \in \R[x_1,\ldots,x_n]$ is
\begin{itemize}
\item \emph{positive} (resp. \emph{non-negative}) from $\mu$ iff $\forall x_1 \geq
    \mu,\ldots,x_n\geq\mu$, $P(x_1,\ldots,x_n)>0$ (resp. $P(x_1,\ldots,x_n)\geq 0$).
\item \emph{absolutely positive} (resp. \emph{absolutely non-negative}) from $\mu$
    iff $P$ is positive (resp. non-negative) from $\mu$, and every partial derivative
    (of any order), $P^*$, of $P$ is non-negative from $\mu$, i.e., $\forall x_1 \geq
    \mu, x_2 \geq \mu, \ldots, x_n \geq \mu$, $P^*(x_1,\ldots,x_n)\geq 0$.
\end{itemize}
\end{defn}
In addition, we will denote by $\sigma_{\mu_1,\ldots,\mu_n}(P)$ the polynomial obtained
from $P$ by translating in dimension $i$ by $\mu_i$ in the negative direction. In other
words, replace $x_i$ by $x_i+\mu_i$ in $P$ for all $i$. If $\mu_i = \mu$ for all $i$ then
we simply write $\sigma_\mu(P)$.Also from \cite{HoHDaJ1998}, a theorem that gives a
necessary and sufficient condition for absolute positivity (and absolute non-negativity)
is reproduced here.

\begin{thm}[Hong, Jaku\v{s} 1998]\label{absPosThm}
Let $P$ be a non-zero polynomial. Then $P$ is absolutely positive (resp. absolutely
non-negative) from $\mu$ iff every coefficient in $\sigma_\mu(P)$ is positive, and the
constant term is nonzero (resp. non-negative). In particular, if $\mu=0$ then every
coefficient in $P$ is positive and the constant term is nonzero (resp. non-negative).
\end{thm}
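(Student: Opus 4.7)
The plan is to first reduce to the base case $\mu=0$ by observing that the substitution $x_i \mapsto x_i + \mu$ is a bijection between the regions $\{x_i \geq \mu\}$ and $\{x_i \geq 0\}$, and that partial differentiation commutes with this shift. Hence $P$ is absolutely positive (resp.\ non-negative) from $\mu$ iff $\sigma_\mu(P)$ is absolutely positive (resp.\ non-negative) from $0$. So it suffices to prove the statement for $\mu = 0$, working with $P$ directly rather than $\sigma_\mu(P)$.

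For the sufficient direction ($\Leftarrow$), suppose every coefficient of $P$ is non-negative and the constant term is nonzero (resp.\ non-negative). For any $x_1,\ldots,x_n \geq 0$, every monomial $c_a x_1^{a_1}\cdots x_n^{a_n}$ contributes a non-negative value, and the constant term is strictly positive (resp.\ non-negative), so $P(x_1,\ldots,x_n) > 0$ (resp.\ $\geq 0$). The same reasoning applies to any partial derivative $P^*$ of $P$: differentiating a monomial with non-negative coefficient produces another monomial with non-negative coefficient (the derivative introduces a non-negative integer factor), so $P^* \geq 0$ on the non-negative orthant. This establishes absolute positivity (resp.\ non-negativity).

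For the necessary direction ($\Rightarrow$), the key observation is that one can read off individual coefficients of $P$ by evaluating iterated partial derivatives at the origin. Specifically, if $c_a$ is the coefficient of $x_1^{a_1}\cdots x_n^{a_n}$ in $P$, then
\[
\frac{\partial^{a_1+\cdots+a_n} P}{\partial x_1^{a_1}\cdots\partial x_n^{a_n}}(0,\ldots,0) = a_1!\cdots a_n! \, c_a.
\]
By the hypothesis of absolute positivity, every partial derivative of $P$ is non-negative on $\{x_i \geq 0\}$, so in particular the left-hand side is $\geq 0$, forcing $c_a \geq 0$. Finally, $P(0,\ldots,0)$ equals the constant term of $P$, and absolute positivity requires $P(0,\ldots,0) > 0$ (resp.\ absolute non-negativity requires $\geq 0$), giving the condition on the constant term.

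There is no real obstacle here; the main ingredient is simply the fact that Taylor coefficients at the origin are extracted by partial derivatives, combined with the monotonicity of monomials on the non-negative orthant. The reduction to $\mu=0$ via $\sigma_\mu$ is essentially a change of variables, and both directions then reduce to a direct check at the single point $(0,\ldots,0)$ on the derivative side and a termwise sign argument on the polynomial side.
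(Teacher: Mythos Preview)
The paper does not supply its own proof of this theorem; it is quoted from Hong and Jaku\v{s} and used as a black box. Your argument is correct and is the standard one: the reduction to $\mu=0$ via the shift (which commutes with partial differentiation), the termwise non-negativity check for the sufficient direction, and the Taylor-coefficient extraction $c_a = \frac{1}{a_1!\cdots a_n!}\,\partial^{a}P(0)$ for the necessary direction are exactly what is needed.
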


Now, it is certainly too much to hope for the polynomials $P_{Q,\Xbar,K}$ to be
\emph{absolutely} positive from zero. Of course, to satisfy Theorem \ref{contrGAS}, it is
only necessary that they be positive from zero (and possibly zero at a few points). Our
algorithm will subdivide the positive \emph{orthant} (the region in which all the
variables are non-negative), denoted by $\posOrth$ where $n$ is the number of variables
in $P$, into regions in which $P$ is positive on the boundary of the region, and
essentially absolutely positive in some direction away from the boundary (i.e., there is
a direction such that the directional derivative is positive).

Since the polynomials we construct while pursuing global asymptotic stability are
typically very complicated (high degree in all variables, some negative coefficients), we
cannot easily show that a directional derivative is positive. Instead, for each region $S
\subset \posOrth$ we will create a polynomial, $P_S(y)$ with the property that if
$P_S(y)\geq 0$ for all $y \in \posOrth$ then $P(x)\geq 0$ for all $x \in S$. We will
first describe the algorithm in two dimensions, and later generalize to the
$n$-dimensional case.

Let $P:=P(x,y)$ be a polynomial in two variables ($n=2$). In order to show that $P(x,y)
\geq 0$ for $(x,y) \in \posOrthP$, we first cut the positive quadrant into 4 regions as
shown in Figure \ref{fourRegions}, where $\xbar$ is some positive number. In the case
that $P=P_{Q,\Xbar,K}$ as in Section \ref{GAStoPoly}, $\xbar$ will be the equilibrium
point of the rational difference equation used to create $P$.
\begin{figure}[h]
\centering
    \includegraphics[scale=.45]{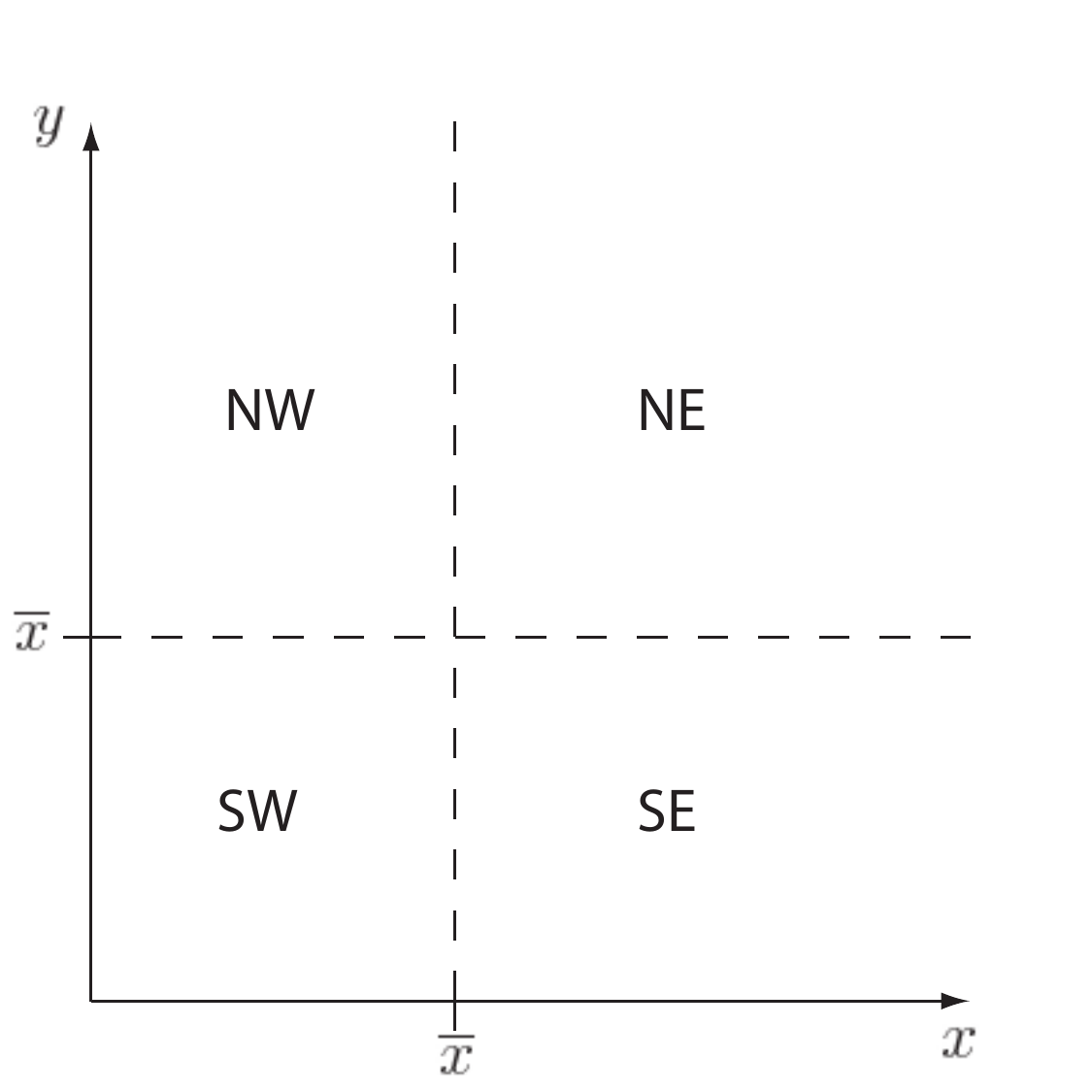}
    \caption{Cutting $\posOrthP$ into 4 regions}\label{fourRegions}
\end{figure}

For each of these four regions we create a new polynomial from $P$ by transforming the
region into $\posOrthP$, and making the corresponding variable substitutions. See Figures
\ref{NEmove} - \ref{NWmove} for the region transformations (transforming SE is analogous
to transforming NW by permuting the $x$ and $y$ axes).
\begin{figure}[h]
\centering
    \includegraphics[scale=.45]{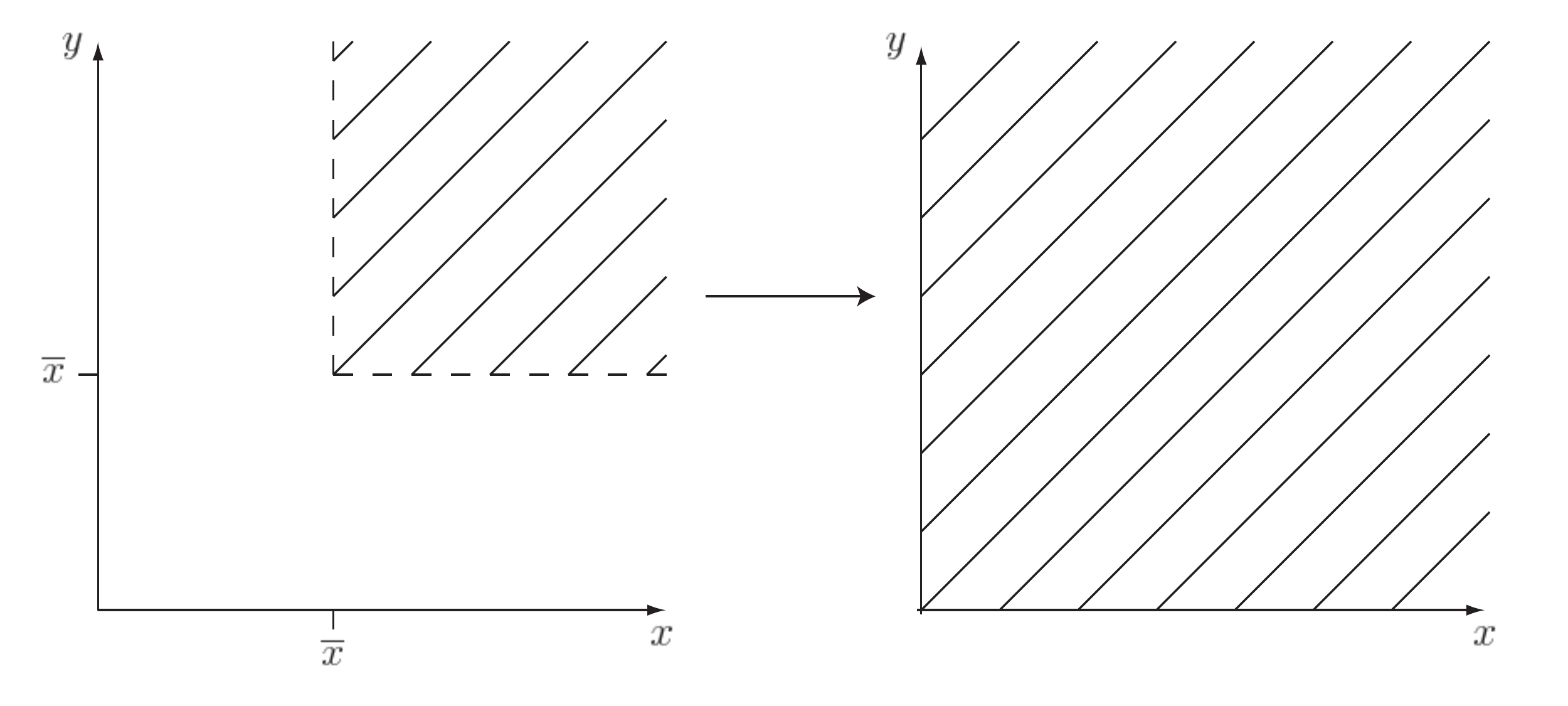}
\caption{Transforming NE to $\posOrthP$}\label{NEmove}
\end{figure}
\begin{figure}[h]
\centering
    \includegraphics[scale=.45]{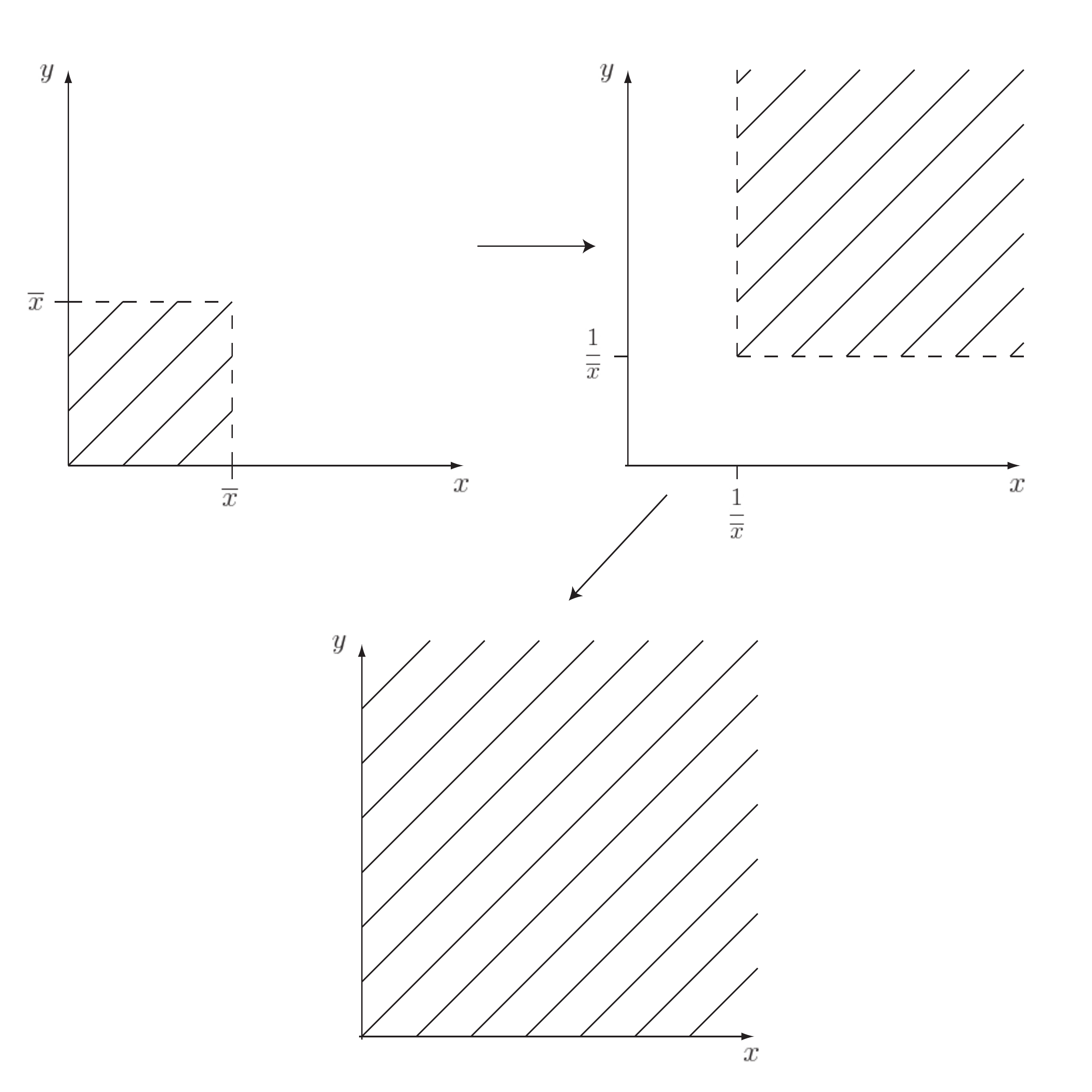}
\caption{Transforming SW to $\posOrthP$}\label{SWmove}
\end{figure}
\begin{figure}[h]
\centering
    \includegraphics[scale=.45]{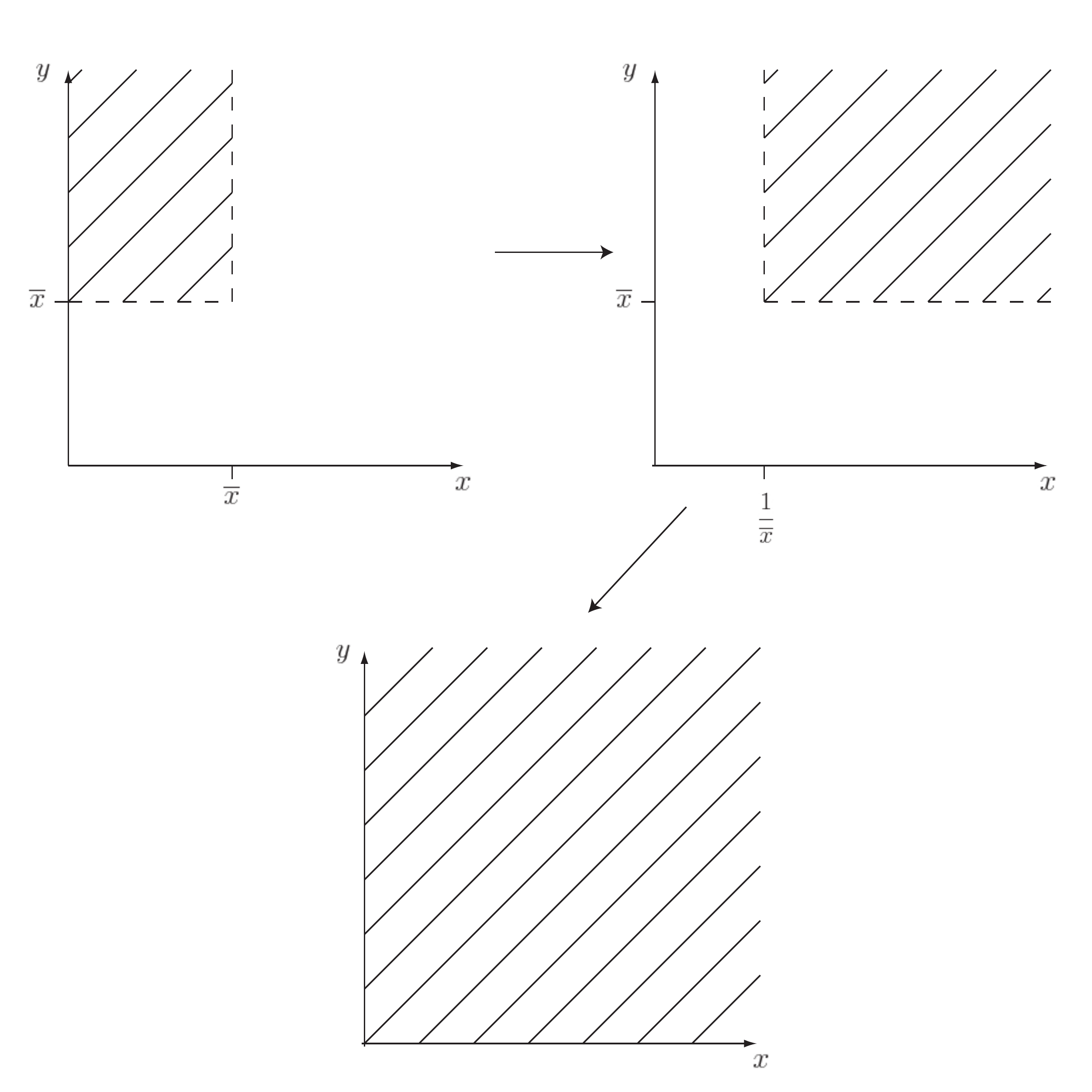}
\caption{Transforming NW to $\posOrthP$}\label{NWmove}
\end{figure}
Based on these region transformations we see that the associated polynomials are given by
\begin{align}\label{fourPolys}
P_{NE}(x,y) &= \sigma_{\xbar}(P) = P(x+\xbar,y+\xbar),\notag\\
P_{SW}(x,y) &= \sigma_{\frac{1}{\xbar}}\left(P\left( \frac{1}{x},\frac{1}{y} \right) x^{d_x} y^{d_y}\right)\notag \\
            &= P\left(\frac{1}{x+\frac{1}{\xbar}},\frac{1}{y+\frac{1}{\xbar}}\right)\left(x+\frac{1}{\xbar}\right)^{d_x}\left(y+\frac{1}{\xbar}\right)^{d_y},\\
P_{NW}(x,y) &= \sigma_{\frac{1}{\xbar},\xbar}\left( P\left(\frac{1}{x}, y\right) x^{d_x} \right) =
                P\left(\frac{1}{x+\frac{1}{\xbar}}, y+\xbar \right) \left( x+\frac{1}{\xbar}\right)^{d_x},\notag\\
P_{SE}(x,y) &= \sigma_{\xbar, \frac{1}{\xbar}}\left( P\left( x,\frac{1}{y} \right) y^{d_y} \right) =
                P\left(x+\xbar,\frac{1}{y+\frac{1}{\xbar}} \right) \left( y+\frac{1}{\xbar}\right)^{d_y}.\notag
\end{align}
The change of variables in $P$ before applying $\sigma$, inverting the variables or not,
is self explanatory based on the transformation of the associated region. However, we
must also multiply by $x^{d_x}$ and/or $y^{d_y}$ (where $d_z=$ the degree of $z$ in $P$
for $z=x,y$) as needed before applying the $\sigma$ shift operator so that the resulting
$\Pbox$ is still a polynomial. When talking generally about one of these polynomials, we
will denote it by $\Pbox$, where the $\Box$ can refer to an arbitrary region. Note that
if $\xbar=0$ we will only consider the $NE$ region, thus avoiding translating by
$\frac{1}{\xbar}=\frac{1}{0}$.

Before we continue the algorithm by giving criteria to test positivity of the polynomials
in \eqref{fourPolys} we must see why proving positivity of all $\Pbox$ will be enough to
prove positivity of $P(x,y)$ for all $(x,y) \in \posOrthP$.
\begin{prop}\label{pos4regions}
Let $P(x,y)$ be a polynomial, $d_x=\deg_{x}(P)$, $d_y=\deg_{y}(P)$, and $\xbar > 0$.
Consider the polynomials $P_{NE}, P_{SW}, P_{NW}, P_{SE}$ as defined in
\eqref{fourPolys}. If these four polynomials are all non-negative from 0 then $P(x,y)$ is
non-negative from 0.
\end{prop}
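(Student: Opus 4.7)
The plan is to show that the four transformations associated with $P_{NE}$, $P_{SW}$, $P_{NW}$, $P_{SE}$ are, up to a strictly positive multiplicative factor, precisely $P$ evaluated on four subregions whose union covers $\posOrthP$, so that non-negativity of each $\Pbox$ on $\posOrthP$ transfers directly to non-negativity of $P$ on the corresponding subregion, and hence on all of $\posOrthP$.

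First I would verify the covering. Reading off from the four formulas in \eqref{fourPolys}, as $(x,y)$ ranges over $\posOrthP$, the arguments of $P$ inside $P_{NE}$, $P_{SW}$, $P_{NW}$, $P_{SE}$ trace out respectively $[\xbar,\infty)^2$, $(0,\xbar]^2$, $(0,\xbar]\times[\xbar,\infty)$, and $[\xbar,\infty)\times(0,\xbar]$ — because $u \mapsto u+\xbar$ maps $[0,\infty)$ bijectively onto $[\xbar,\infty)$ and $u \mapsto 1/(u+1/\xbar)$ maps $[0,\infty)$ bijectively onto $(0,\xbar]$. The union of these four closed regions is $(0,\infty)^2$.

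Next I would transfer non-negativity. For the NE region this is immediate: $P_{NE}(x,y)=P(x+\xbar,y+\xbar)$, so $P_{NE}\geq 0$ on $\posOrthP$ says $P\geq 0$ on $[\xbar,\infty)^2$. For the other three regions the formulas express
\[
\Pbox(x,y) \;=\; P(\text{argument})\cdot M_\Box(x,y),
\]
where $M_\Box(x,y)$ is a product of factors of the form $(x+1/\xbar)^{d_x}$ or $(y+1/\xbar)^{d_y}$ with $\xbar>0$, hence $M_\Box(x,y)>0$ on $\posOrthP$. Thus $\Pbox\geq 0$ on $\posOrthP$ forces $P\geq 0$ on the corresponding transformed region. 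Combining the four conclusions gives $P(a,b)\geq 0$ for every $(a,b)\in(0,\infty)^2$.

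Finally I would handle the axes. The only points in $\posOrthP$ not captured above are those with $a=0$ or $b=0$. Since $P$ is a polynomial and therefore continuous, and $\{(a,b):a>0,b>0\}$ is dense in $\posOrthP$, non-negativity extends to the closure by continuity, completing the proof. The only real subtlety is the bookkeeping in the second step: one must check that the factors $x^{d_x}$ and $y^{d_y}$ inserted before applying $\sigma$ are exactly what is needed to clear all denominators produced by the substitutions $x\mapsto 1/x$, $y\mapsto 1/y$, so that $\Pbox$ is genuinely a polynomial and the identity $\Pbox(x,y)=P(\cdot)\cdot M_\Box(x,y)$ holds with $M_\Box>0$. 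This is a routine but necessary verification that I would perform carefully for each of $P_{SW}$, $P_{NW}$, and $P_{SE}$.
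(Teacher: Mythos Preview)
Your proposal is correct and follows essentially the same route as the paper: both arguments unwind the substitutions in \eqref{fourPolys} to see that $\Pbox(x,y)$ equals $P$ evaluated at a point of the corresponding subregion, multiplied by a strictly positive factor of the form $(x+1/\xbar)^{d_x}$ and/or $(y+1/\xbar)^{d_y}$, so non-negativity transfers. The paper does this case by case via explicit change-of-variable chains $x',x''$; you phrase it more compactly via the bijections $u\mapsto u+\xbar$ and $u\mapsto 1/(u+1/\xbar)$ and the positive multiplier $M_\Box$, which is the same content. Your continuity step to extend non-negativity from $(0,\infty)^2$ to the coordinate axes is a small improvement: the paper's own case analysis only reaches points with both coordinates strictly positive, so this closure argument is needed (and trivially supplied) to match the stated conclusion ``non-negative from $0$''.
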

\begin{proof} For each of the four polynomials we will see that positivity for $(x,y) \in \posOrthP$
implies positivity of $P(x,y)$ in the corresponding region.
\begin{description}
\item[If $P_{NE}(x,y) \geq 0$ for $(x,y) \in \posOrthP$:] Then by definition of
    $P_{NE}(x,y)$ we have
    \begin{align*}
    P_{NE}(x,y)= P(x+\xbar,y+\xbar) &\geq 0 \quad \text{for } x\geq 0, y\geq 0.
    \end{align*}
    Let $x' := x+\xbar$ and $y' := y+\xbar$, then
    \begin{align*}
    P(x',y') &\geq 0 \quad \text{for } x'=x+\xbar \geq \xbar, \text{ and } y'=y+\xbar \geq \xbar.
    \end{align*}
    This says precisely that $P(x,y) \geq 0$ in the region $NE$.
\item[If $P_{SW}(x,y) \geq 0$ for $(x,y) \in \posOrthP$:] Again, by definition of
    $P_{SW}(x,y)$
    \begin{align*}
    P_{SW}(x,y) &= P\left(\frac{1}{x+\frac{1}{\xbar}},\frac{1}{y+\frac{1}{\xbar}}\right)\left(x+\frac{1}{\xbar}\right)^{d_x}\left(y+\frac{1}{\xbar}\right)^{d_y} \geq 0\\
                & \hspace{2.5in}\text{for } x\geq 0, y\geq 0.
    \end{align*}
    Following the previous case we first substitute $x':=x+\frac{1}{\xbar}$ and
    $y':=y+\frac{1}{\xbar}$ to get
    \begin{align*}
    P\left(\frac{1}{x'},\frac{1}{y'}\right)\left(x'\right)^{d_x}\left(y'\right)^{d_y} \geq 0
            \quad \text{for } x'=x+\frac{1}{\xbar}\geq\frac{1}{\xbar}, \text{ and } y'=y+\frac{1}{\xbar} \geq \frac{1}{\xbar}.
    \end{align*}
    Since we are only interested in the region for which $x'$ and $y'$ are both
    strictly positive we may cancel the $\left(x'\right)^{d_x}\left(y'\right)^{d_y}$
    without reversing the inequality. We also make a second substitution letting $x''
    := \frac{1}{x'}$ and $y'' := \frac{1}{y'}$. Now we see that
    \begin{align*}
    P(x'',y'') \geq 0 \quad \text{for } 0< x'' = \frac{1}{x'} \leq \xbar, \text{ and } 0< y'' = \frac{1}{x'} \leq \xbar
    \end{align*}
    which is simply $P(x,y)\geq 0$ in the region $SW$.
\item[If $P_{NW}(x,y) \geq 0$ for $(x,y) \in \posOrthP$:] From the definition in
    \eqref{fourPolys} this means
    \begin{align*}
    P_{NW}(x,y) &= P\left(\frac{1}{x+\frac{1}{\xbar}}, y+\xbar \right) \left( x+\frac{1}{\xbar}\right)^{d_x} \geq 0 \quad \text{for } x\geq 0, y\geq 0.
    \end{align*}
    As in the $SW$ case, we will make two substitutions. The first being $x' :=
    x+\frac{1}{\xbar}$ and $y' := y+\xbar$. This gives us
    \begin{align*}
    P\left(\frac{1}{x'}, y' \right) \left( x'\right)^{d_x} \geq 0 \quad \text{for } x'=x+\frac{1}{\xbar}\geq\frac{1}{\xbar}, \text{ and } y'=y+\xbar \geq \xbar.
    \end{align*}
    Again, we may cancel the $\left( x'\right)^{d_x}$ without reversing the
    inequality since $x'$ is strictly positive in the region in question. Finally, we
    make our second substitution, $x'' := \frac{1}{x'}$ (there is no second
    substitution for $y'$) which yields
    \begin{align*}
    P(x'',y') \geq 0 \quad \text{for } 0< x'' = \frac{1}{x'} \leq \xbar, \text{ and } y' \geq \xbar.
    \end{align*}
    Therefore, $P(x,y) \geq 0$ in the region $NW$.
\item[If $P_{SE}(x,y) \geq 0$ for $(x,y) \in \posOrthP$:] This case is analogous to
    the $NW$ case by interchanging the roles of $x$ and $y$.
\end{description}
In each of the four cases positivity of the polynomial corresponds to positivity of
$P(x,y)$ in the corresponding region.
\end{proof}

To prove positivity of each of the $\Pbox$ we will test two criteria, neither using
anything more powerful than high school algebra.
\begin{description}
\item[PosCoeffs:] From Theorem \ref{absPosThm}, if all coefficients, including the
    constant term, of $\Pbox$ are non-negative then $\Pbox(x,y) \geq 0$ for $(x,y)
    \in \posOrthP$.
\item[SubPoly:] If the only negative coefficient in $\Pbox$ (including the constant
    term) is on the $xy$ term then we check whether the binary quadratic form,
    \begin{align}\label{subP1}
    a x^2+b xy+c y^2
    \end{align}
    where $a$, $b$, and $c$ are coefficients of their respective terms in $\Pbox$, is
    positive definite (i.e., is positive for all $(x,y) \neq 0$) using its
    discriminant. The binary quadratic form discriminant of \eqref{subP1} is defined
    to be $d=4ac-b^2$ \cite{OMeO1971}. If $a, d > 0$, then \eqref{subP1} is positive.
    Then, if this ``sub-polynomial" of $\Pbox$ is positive, $\Pbox$ itself is
    positive (since the other coefficients are positive). Notice that this may not be
    the discriminant most are familiar with. A further discussion of why this is
    taken to be the discriminant can be found when the $n$-dimensional positivity
    algorithm is summarized later in this section.
\end{description}

We also have an easy way to test whether $P(x,y)<0$ for some $(x,y) \in \posOrthP$ by
checking the leading coefficient (the coefficient on the highest degree term) and
constant term.
\begin{description}
\item[LCoeff:] The leading coefficient must be positive, otherwise the polynomial
    eventually tends to negative infinity in some direction.
\item[Const:] Similarly, the constant term must be positive, otherwise the polynomial
    is negative in a neighborhood of the origin.
\end{description}

For each region $\Box$, if $\Pbox$ passes one of \posCoeffs or \subPoly then, by
Proposition \ref{pos4regions}, $P(x,y) \geq 0$ in the region $\Box$. If $\Pbox$ fails one
of \lCoeff or \Const then we output \textbf{false} immediately because we know that there
are points in region $\Box$ for which $P(x,y)$ is negative. However, for some region
$\Box$, if $\Pbox$ has too many negative coefficients, its leading coefficient is
positive, and its constant term is positive, then we must do more tests to establish
positivity of $\Pbox$ on $\posOrthP$.

We would like to subdivide our original region (NE, NW, SE, or SW) into finitely many
pieces and try again. However, there isn't an obvious way to do this since, except for
SW, the regions are infinite, and we have used our ``obvious" cutpoint, $\xbar$. So
instead, we will first map the infinite region into a finite rectangle with lower left
corner at the origin (see figures \ref{NEfinitize} and \ref{NWfinitize}) and create a new
polynomial $P'_\Box(x,y)$ from $P(x,y)$ for each now finite region. We will then
subdivide this finite region in order to prove that $P'_\Box(x,y) \geq 0$ on the region
in which it is defined. These new polynomials will be defined in the following manner
based on their corresponding region transformations.
\begin{figure}[h!]
\centering
    \includegraphics[scale=.45]{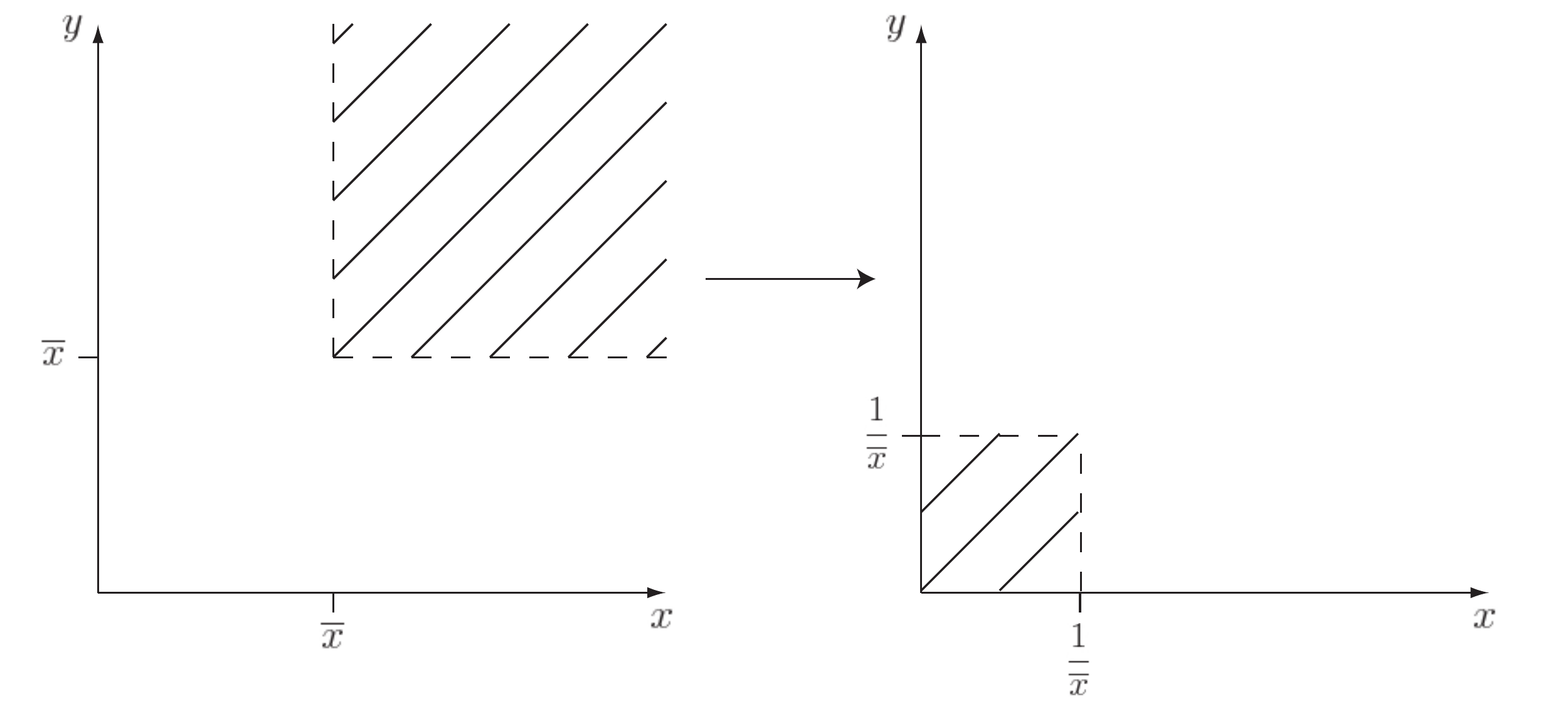}
\caption{Transforming NE to finite rectangle}\label{NEfinitize}
\end{figure}
\begin{figure}[h!]
\centering
    \includegraphics[scale=.45]{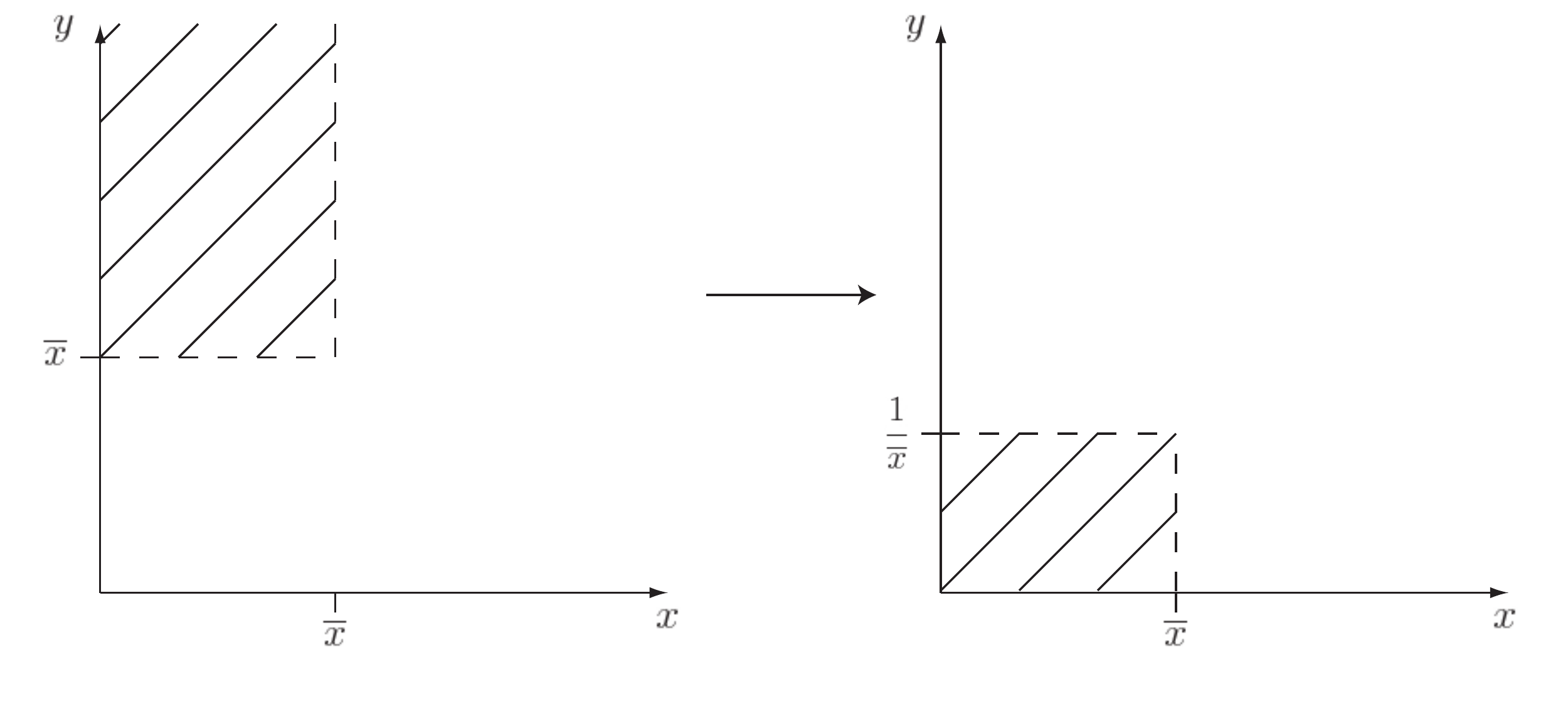}
\caption{Transforming NW to finite rectangle (SE similar by interchanging axes)}\label{NWfinitize}
\end{figure}

\begin{align}\label{fourPrimedPolys}
\begin{tabular}{rcll}
$P'_{NE}(x,y)$\hspace{-.35cm} &$=$& \hspace{-.35cm}$P\left( \frac{1}{x}, \frac{1}{y} \right) x^{d_x} y^{d_y}$ & restricted to $0<x,y\leq \frac{1}{\xbar}$\vspace{.1cm}\\
$P'_{NW}(x,y)$\hspace{-.35cm} &$=$& \hspace{-.35cm}$P\left( x,\frac{1}{y} \right)y^{d_y}$                     & restricted to $0\leq x\leq \xbar, 0< y \leq \frac{1}{\xbar}$ \vspace{.1cm}\\
$P'_{SE}(x,y)$\hspace{-.35cm} &$=$& \hspace{-.35cm}$P\left( \frac{1}{x},y \right)x^{d_x}$                     & restricted to $0< x\leq \frac{1}{\xbar}, 0\leq y \leq \xbar$ \vspace{.1cm}\\
$P'_{SW}(x,y)$\hspace{-.35cm} &$=$& \hspace{-.35cm}$P(x,y)$                                                   & restricted to $0\leq x\leq \xbar, 0< y \leq \frac{1}{\xbar}$
\end{tabular}
\end{align}

Along the lines of Proposition \ref{pos4regions} we can guarantee positivity of $P(x,y)$
given positivity of the related polynomials \eqref{fourPrimedPolys}.
\begin{prop}\label{pos4regionsPrime}
Let $P(x,y)$ be a polynomial, $d_x=\deg_{x}(P)$, $d_y=\deg_{y}(P)$, and $\xbar > 0$.
Consider the polynomials $P'_{NE}, P'_{SW}, P'_{NW}, P'_{SE}$ as defined in
\eqref{fourPrimedPolys}. If any one of these polynomials, generally denoted $P'_\Box$, is
positive on the region indicated in \eqref{fourPrimedPolys}, then $P(x,y)$ is positive on
the region $\Box$. For example, if $P'_{NW}(x,y) \geq 0$ on $0 \leq x \leq \xbar$ and $0
< y \leq \frac{1}{\xbar}$, then $P(x,y) \geq 0$ on the region $NW$, and similarly for the
other polynomials/regions.
\end{prop}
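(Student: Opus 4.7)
The proof plan is to handle each of the four cases separately, exactly paralleling the structure of the proof of Proposition \ref{pos4regions}: in each case we apply an explicit change of variables that carries the finite rectangle on which $P'_\Box$ is assumed to be non-negative onto the infinite region $\Box$, and we observe that the auxiliary polynomial factors introduced to clear denominators are strictly positive on the relevant open set, so they can be divided out without reversing the inequality.

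Concretely, for the $NE$ case, I would assume $P'_{NE}(x,y) = P(1/x,1/y)\,x^{d_x}y^{d_y} \geq 0$ for $0 < x,y \leq 1/\xbar$, then set $x' = 1/x$ and $y' = 1/y$ so that the hypothesis reads $P(x',y')\,(x')^{-d_x}(y')^{-d_y} \geq 0$ for $x',y' \geq \xbar$; since $(x')^{-d_x}(y')^{-d_y} > 0$ on this strictly positive region we may cancel to obtain $P(x',y') \geq 0$ throughout $NE$. For $SW$ no substitution is needed at all, since $P'_{SW}(x,y) = P(x,y)$ and the indicated rectangle is precisely the $SW$ region (up to the boundary technicality coming from whether $\xbar$ lies in $[0,\infty)$ or $(0,\infty)$). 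For $NW$, the substitution $x' = x$, $y' = 1/y$ sends the rectangle $0 \le x \le \xbar$, $0 < y \le 1/\xbar$ to $0 \le x' \le \xbar$, $y' \ge \xbar$, which is exactly the $NW$ region, and the factor $y^{d_y} = (y')^{-d_y}$ is again strictly positive and may be cancelled. The $SE$ case is handled identically by swapping the roles of $x$ and $y$.

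The key observations that make the substitutions legitimate are, first, that each factor of $x^{d_x}$ or $y^{d_y}$ introduced in the definition of $P'_\Box$ is strictly positive on the half-open rectangle (the ``$0<$'' rather than ``$0\le$'' endpoints in \eqref{fourPrimedPolys} are precisely what is needed for this), and second, that the maps $x \mapsto 1/x$ on $(0,1/\xbar]$ and $y \mapsto 1/y$ on $(0,1/\xbar]$ are bijections onto $[\xbar,\infty)$, so that the full infinite region $\Box$ is actually swept out and no point of $\Box$ is missed.

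There is no real obstacle here: the argument is a routine substitution, made a little fussy only by the need to track which endpoints are open versus closed so that the division by $x^{d_x}$ or $y^{d_y}$ is justified. The one conceptual point worth flagging explicitly is that, unlike Proposition \ref{pos4regions} where we needed all four $P_\Box$ to be non-negative simultaneously, here we are asserting that positivity of any single $P'_\Box$ suffices to conclude positivity of $P$ on the corresponding $\Box$ alone, which is exactly what the cancellation argument gives region by region.
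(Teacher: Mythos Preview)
Your proposal is correct and follows essentially the same approach as the paper: an explicit change of variables (inverting the appropriate coordinate(s)) together with cancellation of the strictly positive monomial factors $x^{d_x}$ and/or $y^{d_y}$. The paper in fact writes out only the $NW$ case in detail and remarks that the others follow analogously, with $SW$ requiring no work; your account is slightly more complete but identical in substance.
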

\begin{proof}
We will only see the proof for $P'_{NW}(x,y)$, the rest will follow in nearly the same
manner. Assume that $P'_{NW}(x,y) \geq 0$ for $0 \leq x \leq \xbar$ and $0 < y \leq
\frac{1}{\xbar}$. Then by definition of $P'_{NW}(x,y)$ we know that
\[P\left( x,\frac{1}{y} \right)y^{d_y} \geq 0 \quad \text{for }0 \leq x \leq \xbar, 0 < y \leq \frac{1}{\xbar}.\]
Since $y$ is strictly positive in the region in which $P'_{NW}(x,y)$ is defined, we can
cancel $y^{d_y}$ without reversing the inequality just as we did in the proof of
Proposition \ref{pos4regions}. Now, let $y' := \frac{1}{y}$ to see that
\[P\left( x, y'\right) \geq 0\]
for $0 \leq x \leq \xbar$ and $y' = \frac{1}{y} \geq \xbar$, which is precisely the
region $NW$. The other regions will follow by doing substitutions $x' := \frac{1}{x}$ and
$y' := \frac{1}{y}$ as necessary. Note that no work needs to be done for $SW$ since
$P'_{SW}(x,y) = P(x,y)$ and is defined in the region $SW$.
\end{proof}
Next, we need to see how to prove that $P'_\Box(x,y)\geq 0$ on the desired region. We
will do this by subdividing the domain of $P'_\Box$ into finitely many smaller
rectangles. Then, for each smaller rectangle, $S = \left\{ a \leq x \leq b, c \leq y \leq
d \right\}$, we transform it to $\posOrthP$ creating a corresponding polynomial,
$P''_S(x,y)$, and test criteria \posCoeffs and \subPoly to see whether this polynomial is
positive. See Figure \ref{rectTransf} for the transformation of a general rectangle, $S$,
to $\posOrthP$. Given this transformation, the polynomial, $P''_S$, is given by
\begin{align}\label{Prectangle}
P''_S(x,y) &= \sigma_{\frac{1}{b-a},\frac{1}{d-c}}\left(P'_\Box\left(\frac{1}{x}+a,\frac{1}{y}+c\right) x^{d'_x} y^{d'_y}\right)\notag\\
         &=P'_\Box\left(\frac{1}{x+\frac{1}{b-a}}+a,\frac{1}{y+\frac{1}{d-c}}+c\right) \left(x+\frac{1}{b-a}\right)^{d'_x} \left(y+\frac{1}{d-c}\right)^{d'_y}
\end{align}
where $P'_\Box$ is one of $P'_{NE}$, $P'_{NW}$, $P'_{SE}$, $P'_{SW}$, and $d'_z =
\text{degree of } z \text{ in } P'_\Box$ for $z=x,y$.
\begin{figure}[h!]
\centering
    \includegraphics[scale=.45]{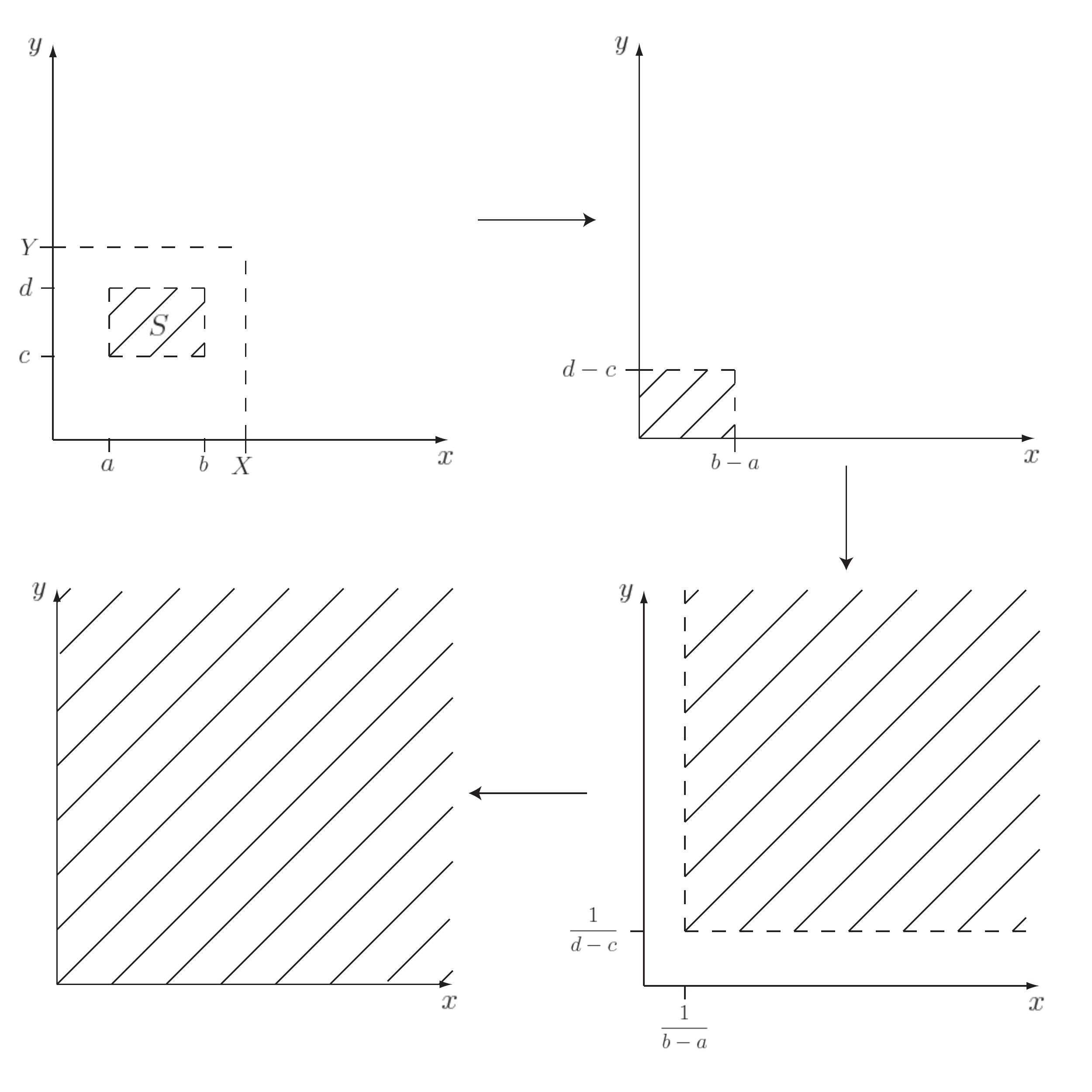}
\caption{Transforming general rectangle, $S$, to $\posOrthP$}\label{rectTransf}
\end{figure}
Before we see the canonical subdivision algorithm let us see why this $P''_S(x,y)$ will
give the desired result.
\begin{prop}\label{subDivProp}
Let $P'_\Box(x,y)$ be a polynomial, $d'_x=\deg_{x}(P'_\Box)$, $d'_y=\deg_{y}(P'_\Box)$,
$0 \leq a < b$, and $0 \leq c < d$. Consider the polynomial $P''_S$ as defined in
\eqref{Prectangle}. If $P''_S(x,y)$ is positive on $\posOrthP$, then $P'_\Box(x,y)$ is
positive on the rectangle $S = \left\{ a \leq x \leq b, c \leq y \leq d \right\}$.
\end{prop}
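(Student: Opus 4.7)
The plan is to mimic the proofs of Propositions \ref{pos4regions} and \ref{pos4regionsPrime}, namely to unwind the two substitutions built into the definition \eqref{Prectangle} one at a time and track the resulting domain transformations. Starting from the hypothesis $P''_S(x,y) \geq 0$ for all $(x,y) \in \posOrthP$, I would first introduce $u := x + \frac{1}{b-a}$ and $v := y + \frac{1}{d-c}$, which undoes the outer $\sigma$ shift and gives
\[
P'_\Box\!\left(\tfrac{1}{u}+a,\;\tfrac{1}{v}+c\right) u^{d'_x} v^{d'_y} \;\geq\; 0
\quad\text{for } u \geq \tfrac{1}{b-a},\; v \geq \tfrac{1}{d-c}.
\]

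Next, I would observe that on this domain $u$ and $v$ are both strictly positive, so the factor $u^{d'_x} v^{d'_y}$ is strictly positive and may be divided out without flipping the inequality, exactly as in the $SW$ case of Proposition \ref{pos4regions}. This leaves
\[
P'_\Box\!\left(\tfrac{1}{u}+a,\;\tfrac{1}{v}+c\right) \;\geq\; 0.
\]
Then I would make the reciprocal substitution $X := \tfrac{1}{u}+a$ and $Y := \tfrac{1}{v}+c$. Since $u \geq \tfrac{1}{b-a}$ forces $0 < \tfrac{1}{u} \leq b-a$, we get $a < X \leq b$, and symmetrically $c < Y \leq d$. Hence $P'_\Box(X,Y) \geq 0$ on the half-open rectangle $(a,b]\times(c,d]$.

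To close the gap between this half-open rectangle and the closed rectangle $S$, I would invoke continuity of the polynomial $P'_\Box$: the inequality $P'_\Box \geq 0$ on $(a,b]\times(c,d]$ extends to its closure $[a,b]\times[c,d]=S$. The only genuine step requiring care is the bookkeeping on the bounds in step three, since it is crucial that the choices $\tfrac{1}{b-a}$ and $\tfrac{1}{d-c}$ in \eqref{Prectangle} were engineered precisely so that the induced range of $X$ and $Y$ is exactly the rectangle $S$ (up to the one-sided boundary which continuity supplies); everything else is a direct analogue of the substitutions already carried out in the proofs of Propositions \ref{pos4regions} and \ref{pos4regionsPrime}, so I expect no genuine obstacle beyond that verification.
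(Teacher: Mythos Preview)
Your proposal is correct and follows essentially the same substitution-unwinding argument as the paper's own proof; the only difference is cosmetic ordering (the paper cancels the positive factor before introducing the shifted variables, and splits your single substitution $X=\tfrac{1}{u}+a$ into two separate steps). You are in fact slightly more careful than the paper, which arrives at the same half-open rectangle $(a,b]\times(c,d]$ and simply asserts the conclusion on $S$ without explicitly invoking continuity.
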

\begin{proof}
This proof follows the form of the proofs for Propositions \ref{pos4regions} and
\ref{pos4regionsPrime}. First, by definition of $P''(S)$, the fact that $P''_S(x,y) \geq
0$ for $x,y \geq 0$ means
\begin{align*}
P'_\Box\left(\frac{1}{x+\frac{1}{b-a}}+a,\frac{1}{y+\frac{1}{d-c}}+c\right) \left(x+\frac{1}{b-a}\right)^{d'_x} \left(y+\frac{1}{d-c}\right)^{d'_y} \geq 0.
\end{align*}
As in the previous proofs we may cancel $\left(x+\frac{1}{b-a}\right)^{d'_x}
\left(y+\frac{1}{d-c}\right)^{d'_y}$ without reversing the inequality. Let $x' =
x+\frac{1}{b-a}$ and $y' = y+\frac{1}{d-c}$, then
\begin{align*}
P'_\Box\left(\frac{1}{x'}+a,\frac{1}{y'}+c\right) \geq 0
\end{align*}
for $x' = x+\frac{1}{b-a} \geq \frac{1}{b-a}$ and $y' = y+\frac{1}{d-c} \geq
\frac{1}{d-c}$. Next, let $x'' = \frac{1}{x'}$ and $y'' = \frac{1}{y'}$. Making this
substitution yields
\[P'_\Box\left(x''+a,y''+c\right) \geq 0\]
for $0 < x'' = \frac{1}{x'} \leq b-a$ and $0 < y'' = \frac{1}{y'} \leq d-c$. For the
final substitution, let $x''' = x''+a$ and $y''' = y''+c$. Then
\[P'_\Box\left(x''', y'''\right) \geq 0\]
for $a < x''' = x''+a \leq b$ and $c<y''' = y''+c \leq d$. In other words, $P'_\Box
\left(x, y \right) \geq 0$ for $(x,y) \in S$.
\end{proof}

In principle any subdivision will work so long as we cover the entire finite rectangle.
However, since the goal is to program the algorithm we need to specify a canonical
subdivision. First we will simply divide into four equal regions. For each region we
perform the above steps (transform the region and polynomial, apply criteria \posCoeffs
and \subPoly\!\!). If we fail either criteria on a specific subregion, then we subdivide
that subregion into four again and repeat. We continue to do this until we pass
\posCoeffs or \subPoly\!\!\!, fail \lCoeff or \Const (and output \textbf{false}), or we
reach some stopping condition and output \textbf{FAIL}. A stopping condition could be
that we have subdivided $N$ times, for some large $N$.

Before we summarize the positivity algorithm in the general $n$-dimensional case we must
see the general \subPoly criteria. It was previously stated only in the case of 2
variables.
\begin{description}
\item[SubPoly-$n$:] If the only negative coefficients in $\Pbox$ (including the
    constant term) are on terms of the form $x_r x_s$ then we check whether the
    quadratic form \cite{La2004},
    \begin{align}\label{subP}
    \sum_{\stackrel{i,j=1}{i\leq j}}^n a_{i,j} x_i x_j
    \end{align}
    where $a_{i,j}$ are coefficients of their respective terms in $\Pbox$, is
    positive definite (i.e., is positive for all $\tup{x_1,\ldots,x_n} \neq
    \tup{0,\ldots,0}$) using its corresponding matrix. The symmetric coefficient
    matrix is defined as: $A = (a'_{i,j})$, where $a'_{i,j}=a'_{j,i} =
    \frac{1}{2}a_{i,j}$ for all $i\neq j$, and $a'_{i,i}=a_{i,i}$ \cite{OMeO1971}.
    Given this matrix, we can equivalently think of the quadratic form as $\mathbf{x}
    A \mathbf{x}^T$, where $\mathbf{x}=\tup{x_1,\ldots,x_n}$. Since $A$ is a
    symmetric matrix, we know from the spectral theorem that it is diagonalizable by
    an orthonormal matrix, $Q$, so we have that $A = Q D Q^{\top}$ where $D$ is a
    diagonal matrix. We can then rewrite the quadratic form as:
    \begin{align*}
    \mathbf{x} A \mathbf{x}^T &= \mathbf{x} Q D Q^{\top} \mathbf{x}^{\top}\\
                              &= \tilde{\mathbf{x}} D \tilde{\mathbf{x}}^{\top}.
    \end{align*}
    From this we easily see that the quadratic form is positive definite iff all
    eigenvalues of $A$ are positive (i.e., $A$ is positive definite). Then, if this
    quadratic ``sub-polynomial" of $\Pbox$ is positive, $\Pbox$ itself is positive
    (since the other coefficients are positive).
\end{description}

We are now ready to summarize the algorithm in the $n$-dimensional case. Assume we have a
polynomial $P \in \R[x_1,\ldots,x_n]$, and want to test whether $P(x_1,\ldots,x_n)\geq0$
for $(x_1,\ldots,x_n) \in \posOrth$.
\begin{enumerate}
\item\label{subdivide1} First cut $\posOrth$ into regions, similar to the $NW$, $NE$,
    $SW$, $SE$ regions. For each variable we have 2 possibilities for its domain
\[ 0 \leq x_i \leq \xbar \quad \text{or} \quad \xbar \leq x_i <\infty. \]
A region is defined by making a choice for each variable, thus we have $2^n$ regions.
The associated polynomial, $P_\Box$, for each region is then created by substituting
\begin{align*}
x_i \, \text{by} \, \left\{\begin{array}{ll}
                            x_i + \xbar & \text{if } \xbar \leq x_i <\infty\\
                            \frac{1}{x_i+\frac{1}{\xbar}} & \text{if } 0 \leq x_i \leq \xbar
                            \end{array}\right.
\end{align*}
in $P$, and then multiplying by $\left(x_i+\frac{1}{\xbar}\right)^{d_{x_i}}$ if $ 0
\leq x_i \leq \xbar$.
\item\label{test1} For each region we check our 4 criteria \posCoeffs\!\!\!,
    \subPoly\!\!-$n$, \lCoeff\!\!\!, and \Const\!\!. If all $2^n$ polynomials pass
    \posCoeffs or \subPoly\!\!-$n$ then we are done, and $P$ is positive on
    $\posOrth$. If any of the polynomials fail \lCoeff or \Const then we are also
    done because we know that there are values in $\posOrth$ for the variables which
    make $P$ negative. Otherwise, we continue on to step 3 for the regions which fail
    \posCoeffs and \subPoly\!\!-$n$.
\item\label{StepMakeFinite}Assume we have a specific region (domains for each
    variable), $R$, which failed step \ref{test1}. Then we create the polynomial
    $P'_R$ by substituting $\frac{1}{x_i}$ for $x_i$ in $P$ if $x_i$ is restricted to
    $\xbar \leq x_i \leq \infty$ in $R$, and then multiplying by $x_i^{d_{x_i}}$ for
    those variables which were substituted. This new polynomial will be restricted to
    the region $R'$, which is defined from $R$ in the following manner: if $0 \leq
    x_i \leq \xbar$ in $R$, then $x_i$ has the same restriction in $R'$; otherwise,
    $\xbar \leq x_i < \infty$ in $R$, and then $x_i$ is restricted to $0 < x_i \leq
    \frac{1}{\xbar}$ in $R'$. More formally, if $D = \{i: 0 \leq x_i \leq \xbar
    \text{ in }R\}$, and $\bar{D}=[n]\smsetminus D$ then
    \[R' = \left(\bigtimes_{i \in D} \left\{0 \leq x_i \leq \xbar\right\}\right) \times \left(\bigtimes_{i \in \bar{D}} \left\{0 < x_i \leq \frac{1}{\xbar}\right\}\right) \]
    \begin{enumerate}
    \item Subdivide $R'$ into $2^n$ equal regions, $S_j$, and for each region
        create the polynomial $P'_{S_j}$ in the same manner as
        \eqref{Prectangle}.
    \item Test positivity of $P'_{S_j}$ using criteria \posCoeffs\!\!\!,
        \subPoly\!\!-$n$, \lCoeff\!\!\!, and \Const\!\!\!. If $P'_{S_j}$ passes
        \posCoeffs or \subPoly\!\!-$n$ then we are done in region $S_j$ and can
        continue checking the rest of the subregions of $R'$. If $P'_{S_j}$ fails
        \lCoeff or \Const then we stop altogether because we know that there are
        values for the variables in $\posOrth$ which make $P$ negative.
        Otherwise, go back to step 3(a) with region $R'$ now replaced by $S_j$.
    \item If we have recursed more than $N$ times (for some choice of $N$), stop
        and output \textbf{FAIL}.
    \end{enumerate}
\end{enumerate}

Before going on to the proof-of-concept for a specific difference equation and
equilibrium let us see how to apply the polynomial positivity algorithm. We will see two
examples, one in which \subPoly must be used, and one where subdivisions are necessary.
\begin{ex}
Let $P(x,y) = x^2-xy+y^2$ and $\xbar = 1$. First we subdivide $\posOrthP$ into the for
regions $NE$, $SW$, $NW$, $SE$ and get the following polynomials:
\begin{align*}
P_{NE}(x,y) &= \sigma_{1}(P) = x^2-xy+y^2+x+y+1, \\
P_{SW}(x,y) &= \sigma_{1}\left(P\left( \frac{1}{x},\frac{1}{y} \right) x^{d_x} y^{d_y}\right) =
                x^2-xy+y^2+x+y+1,\\
P_{NW}(x,y) &= \sigma_{1,1}\left( P\left(\frac{1}{x}, y\right) x^{d_x}\right) =
                x^2 y^2+2 x^2 y+2 x y^2+x^2+3 x y+y^2+x+y+1,\\
P_{SE}(x,y) &= \sigma_{1, 1}\left( P\left( x,\frac{1}{y} \right)y^{d_y} \right) =
                x^2 y^2+2 x^2 y+2 x y^2+x^2+3 x y+y^2+x+y+1.
\end{align*}
The polynomials $P_{NW}(x,y) = P_{SE}(x,y)$ have all positive coefficients, so they
satisfy \posCoeffs\!\!. To see that $P_{NE}(x,y)$ (which equals $P_{SW}(x,y)$ in this
example) is positive we must test criteria \subPoly\!\!\!.

The only negative coefficient in $P_{NE}(x,y)$ is on the $xy$ term, so we look at the
sub-polynomial $x^2-xy+y^2$. The discriminant of this binary quadratic form is
$4\cdot1\cdot1-1^2=3$, which is positive as needed. So we see that $P_{NE}(x,y)$ (and
thus $P_{SW}(x,y)$) is positive by \subPoly\!\!\!.

In this example we don't have to do further subdivisions since $P_{NW}, P_{NE}, P_{SE},
P_{SW}$ are all positive. Therefore, we are done by Proposition \ref{pos4regions}.\Endex
\end{ex}

\begin{ex}
Let $P(x,y) = x^4 y-5 x^3 y+10 x^2 y+x+y$ and $\xbar = 1$. First we subdivide $\posOrthP$
into the for regions $NE$, $SW$, $NW$, $SE$ and get the following polynomials:
\begin{align*}
P_{NE}(x,y) &= \sigma_{1}(P) = x^4 y+x^4-x^3 y-x^3+x^2 y+2 x^2+9 x y+11 x+7 y+8,\\
P_{SW}(x,y) &= \sigma_{1}\left(P\left( \frac{1}{x},\frac{1}{y} \right) x^{d_x} y^{d_y}\right) =
               x^4+4 x^3+x^2 y+17 x^2+2 x y+21 x+y+8,\\
P_{NW}(x,y) &= \sigma_{1,1}\left( P\left(\frac{1}{x}, y\right) x^{d_x}\right) \\
               &= x^4 y+x^4+4 x^3 y+4 x^3+16 x^2 y+17 x^2+19 x y+21 x+7 y+8,\\
P_{SE}(x,y) &= \sigma_{1, 1}\left( P\left( x,\frac{1}{y} \right)y^{d_y} \right) =
               x^4-x^3+x^2 y+2 x^2+2 x y+11 x+y+8.
\end{align*}
In this example we see that $P_{SW}(x,y)$ and $P_{NW}(x,y)$ pass criteria \posCoeffs
since all coefficients are positive. For the other two regions we will need to subdivide
because the negative coefficients are not on the term $xy$.

Let us first examine $SE$. We need to create the polynomial $P'_{SE}(x,y)$ as in
\ref{fourPrimedPolys}:
\begin{align*}
P'_{SE}(x,y) &= P\left( \frac{1}{x},y \right)x^{d_x}\\
             &= x^4 y+10 x^2 y+x^2-5 x y+y\quad \text{restricted to } 0<x \leq 1, 0\leq y \leq 1.
\end{align*}
Then we subdivide the region $0<x \leq 1, 0\leq y \leq 1$ into four equal rectangles:
\begin{align*}
S_1 &= \left\{0 < x \leq \frac{1}{2}, 0\leq y \leq \frac{1}{2}\right\},\qquad
S_2 = \left\{0 < x \leq \frac{1}{2}, \frac{1}{2} \leq y \leq 1\right\},\\
S_3 &= \left\{\frac{1}{2} \leq x \leq 1, 0\leq y \leq \frac{1}{2}\right\},\qquad
S_4 = \left\{\frac{1}{2} \leq x \leq 1, \frac{1}{2} \leq y \leq 1\right\},
\end{align*}
and create four associated polynomials using \eqref{Prectangle}:
\begin{align*}
P''_{S_1}(x,y) =& x^4+3 x^3+x^2 y+6 x^2+4 x y+20 x+4 y+25,\\
P''_{S_2}(x,y) =& \frac{1}{2} x^4 y+2 x^4+\frac{3}{2} x^3 y+6 x^3+ 3 x^2 y+10 x^2+\\
                & +10 x y+32 x+\frac{25}{2} y+42,\\
P''_{S_3}(x,y) =& \frac{1}{4} x^4 y+\frac{25}{16}x^4+3 x^3 y+20 x^3+ 13 x^2 y+\\
                & +96 x^2+24 x y+196 x+16 y+144,\\
P''_{S_4}(x,y) =& \frac{25}{32} x^4 y+\frac{21}{8} x^4+10 x^3 y+34 x^3+48 x^2 y+\\
                & +166 x^2+98 x y+344 x+72 y+256.
\end{align*}
All four polynomials for the subdivision of $SE$ are positive by \posCoeffs\!\!\!,
therefore $P'_{SE}(x,y) \geq 0$, and by Proposition \ref{pos4regionsPrime} we see that
$P(x,y)\geq 0$ on the region $SE$.

Now let us look at $P_{NE}$. Create $P'_{NE}$ as indicated by \eqref{fourPrimedPolys}:
\begin{align*}
P'_{NE}(x,y) &= P\left( \frac{1}{x},\frac{1}{y} \right)x^{d_x}y^{d_y}\\
             &= 8 x^4 y+7 x^4+11 x^3 y+9 x^3+2 x^2 y+x^2-x y-x+y+1 \\
             &\hspace{2in}\text{restricted to } 0<x \leq 1, 0\leq y \leq 1.
\end{align*}
Subdivide the region $0<x \leq 1, 0\leq y \leq 1$ into the same $S_1$, $S_2$, $S_3$, and
$S_4$ as above, and create the polynomials $P''_{S_i}$, this time from $P'_{NE}$:
\begin{align*}
P''_{S_1}(x,y) =& x^4 y+3 x^4+7 x^3 y+21 x^3+19 x^2 y+58 x^2+\\
                & + 33 x y+105 x+37 y+120,\\
P''_{S_2}(x,y) =& \frac{3}{2} x^4 y+4 x^4+\frac{21}{2} x^3 y+28 x^3+29 x^2 y+78 x^2+\\
                & + \frac{105}{2} x y+144 x+60 y+166,\\
P''_{S_3}(x,y) =& \frac{37}{16} x^4 y+\frac{15}{2} x^4+\frac{115}{4} x^3 y+\frac{375}{4} x^3+142 x^2 y+\\
                & + 463 x^2+320 x y+1040 x+272 y+880,\\
P''_{S_4}(x,y) =& \frac{15}{4} x^4 y+\frac{83}{8} x^4+\frac{375}{8} x^3 y+130 x^3+\frac{463}{2} x^2 y+\\
                & + 642 x^2+520 x y+1440 x+440 y+1216.
\end{align*}
As before, all four subdivision polynomials pass \posCoeffs\!\!\!, and so they are
positive. Therefore, by Propositions \ref{pos4regions}, \ref{pos4regionsPrime}, and
\ref{subDivProp} we know that $P(x,y)\geq 0$ for $(x,y)~\in~\posOrthP$.\Endex
\end{ex}

\section{Proof of Concept}\label{ProofOfConcept} We have now seen the full algorithm to
prove GAS of equilibrium points of rational difference equations. However, there is no
reason a priori that this algorithm is applicable. It could be the case that no such $K$
(see Section \ref{GAStoPoly} for a definition of $K$) exists, and this algorithm would be
useless.

We will now see that this technique does, in fact, work to prove global asymptotic
stability of an equilibrium of a particular rational difference equation. The proof of
the following theorem to establish global asymptotic stability will go through the
procedure outlined in the previous sections.
\begin{thm}
For the rational difference equation
\begin{align}\label{RunningExample}
x_{n+1} = \frac{4+x_{n}}{1+x_{n-1}},
\end{align}
the equilibrium, $\xbar= 2$, is GAS.
\end{thm}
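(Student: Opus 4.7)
The plan is to apply the end-to-end procedure built up in Sections \ref{GAStoPoly} and \ref{PosMethods} to this specific recurrence. First I would pass to the vector form
\[
Q\!\left(\begin{bmatrix} x_n \\ x_{n-1}\end{bmatrix}\right) = \begin{bmatrix} \frac{4+x_n}{1+x_{n-1}} \\ x_n \end{bmatrix},
\]
verify that $\Xbar = \tup{2,2}$ is the unique fixed point in $(0,\infty)^2$, and record that by Theorem \ref{contrGAS} it suffices to produce an integer $K\geq 1$ for which $\myNorm{Q^K(\calX)-\Xbar} < \myNorm{\calX-\Xbar}$ on $(0,\infty)^2\setminus\{\Xbar\}$. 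Since $Q$ itself is not a contraction near $\Xbar$ (the linearization has eigenvalues off the real axis, producing a rotation), I expect $K=1$ to fail and will run the brute-force conjecturing loop of Section \ref{GAStoPoly} on a fine mesh to guess the smallest workable exponent, likely $K=2$ or $K=3$.

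Once such a $K$ is conjectured, I would form the polynomial
\[
P(\calX) := P_{Q,\Xbar,K}(\calX) = \text{numerator}\left(\myNorm{\calX-\Xbar}^2 - \myNorm{Q^K(\calX)-\Xbar}^2\right)
\]
in the two variables $x_n, x_{n-1}$, and then feed it to the positivity algorithm of Section \ref{PosMethods} with cutpoint $\xbar = 2$. Concretely, I would split $\R^2_+$ into the four regions $NE, SW, NW, SE$ around $(2,2)$, build the four polynomials $P_{NE}, P_{SW}, P_{NW}, P_{SE}$ from \eqref{fourPolys}, and test each with \posCoeffs and \subPoly. By Proposition \ref{pos4regions}, if all four pass then $P\geq 0$ on $\R^2_+$, and then the chain of equivalences \eqref{contrCriteria2} gives the contraction inequality needed to invoke Theorem \ref{contrGAS}.

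For any of the four regions that fail the two blanket tests, I would then use the finitization of Proposition \ref{pos4regionsPrime} to replace the region by a bounded rectangle and apply the canonical dyadic subdivision of Proposition \ref{subDivProp}, recursing until either every piece is certified positive by \posCoeffs or \subPoly, or a piece is witnessed negative by \lCoeff or \Const. The main obstacle I anticipate is not logical but computational: already for $K=2$ or $K=3$ the numerator $P$ has very high total degree and many monomials with mixed signs, so the $NE$ and $SW$ subregions may require several levels of dyadic refinement before the translated coefficients all become positive. I would therefore expect to be lucky enough that no region fails \lCoeff or \Const (which would mean the conjectured $K$ is wrong and force a restart with $K+1$), and that the recursion terminates well below the depth cutoff $N$. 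If it does, assembling Propositions \ref{pos4regions}, \ref{pos4regionsPrime}, and \ref{subDivProp} gives $P\geq 0$ on $\R^2_+$, which via \eqref{contrCriteria2} yields \eqref{contrCriteria}, and Theorem \ref{contrGAS} then delivers GAS of $\xbar = 2$.
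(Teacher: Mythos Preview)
Your plan is exactly the paper's approach, so the overall strategy is correct. Two concrete points where your expectations diverge from what actually happens:

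First, the workable exponent is $K=5$, not $K=2$ or $K=3$. Your loop would eventually reach it, but since this is meant to be a proof you should state and verify the specific value rather than leave it as a search.

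Second, with $K=5$ the positivity verification is cleaner than you anticipate: no dyadic subdivision is needed at all. After forming $P_{NE}, P_{NW}, P_{SE}, P_{SW}$ around the cutpoint $\xbar=2$, the polynomials $P_{NW}$ and $P_{SE}$ have all nonnegative coefficients and pass \posCoeffs directly, while $P_{NE}$ and $P_{SW}$ each have a single negative coefficient, on the $x_1x_2$ term, so \subPoly applies and the discriminant check succeeds. Proposition~\ref{pos4regions} then finishes the argument without ever invoking Propositions~\ref{pos4regionsPrime} or~\ref{subDivProp}. So your proposal is a correct outline of the method, but as written it is a plan rather than a proof; to complete it you need to fix $K=5$, exhibit (or at least certify) the polynomial $P_{Q,\Xbar,5}$, and record that the four region polynomials pass the first-level tests.
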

\begin{proof}
We will prove that $K=5$ satisfies \eqref{contrCriteria} from Theorem \ref{contrGAS}.
From the rational difference equation, the equilibrium $\xbar=2$, and $K=5$ we get the
polynomial, $P~:=~P_{\tup{2,2},5}(\tup{x_1,x_2})$, as defined in \eqref{polyToPos}:

\begin{align*}
P &= 25x_1^8x_2^4+340x_1^8x_2^3+1606x_1^8x_2^2+3060x_1^8x_2+2025x_1^8+60x_1^7x_2^5+
1158x_1^7x_2^4+\\
+&8460x_1^7x_2^3+28936x_1^7x_2^2+45848x_1^7x_2+27090x_1^7+71x_1^6x_2^6+
1418x_1^6x_2^5+\\
+&11229x_1^6x_2^4+53362x_1^6x_2^3+147345x_1^6x_2^2+207144x_1^6x_2+113103x_1^6+72x_1^5x_2^7+\\
+&1420x_1^5x_2^6+9012x_1^5x_2^5+20174x_1^5x_2^4+24716x_1^5x_2^3+74718x_1
^5x_2^2+163032x_1^5x_2+\\
+&108952x_1^5+47x_1^4x_2^8+1276x_1^4x_2^7+11120x_1^4x_2^6+25528x_1^4x_2^5-118780x_1^4x_2^4-\\
-&688300x_1^4x_2^3-1195361x_1^4x_2^2-790736x_1^4x_2-148969x_1^4+12x_1^3x_2^9+538x_1^3x_2^8+\\
+&7854x_1^3x_2^7+45864x_1^3x_2^6+53604x_1^3x_2^5-515564x_1^3x_2^4-2066454x_1^3x_2^3-\\
-&2469564x_1^3x_2^2-207576x_1^3x_2+833882x_1^3+x_1^2x_2^{10}+86x_1^2x_2^9+2109x_1^2x_2^8+\\
+&22070x_1^2x_2^7+102117x_1^2x_2^6+105526x_1^2x_2^5-695269x_1^2x_2^4-1867364x_1^2x_2^3+\\
+&785343x_1^2x_2^2+6256056x_1^2x_2+4716817x_1^2+4x_1x_2^{10}+198x_1x_2^9+3530x_1x_2^8+\\
+&29636x_1x_2^7+117218x_1x_2^6+136288x_1x_2^5-289440x_1x_2^4+253318x_1x_2^3+\\
+&5674806x_1x_2^2+11634024x_1x_2+7054300x_1+4x_2^{10}+148x_2^9+2145x_2^8+15348x_2^7+\\
+&53870x_2^6+69340x_2^5+30579x_2^4+801874x_2^3+3802411x_2^2+6262908x_2+3488704.
\end{align*}

The goal is to prove that this polynomial is positive when all variables are positive.
Recall that we created this polynomial by taking the numerator of
\[\myNorm{\calX-\Xbar}^2 - \myNorm{Q^5(\calX) - \Xbar}^2,\]
where $Q(\calX)$ is the map
\begin{align*}
Q\left(\left[\begin{array}{c}
               x_n\\
               x_{n-1}
               \end{array}\right]\right) = \left[\begin{array}{c}
                                        \frac{4+x_n}{1+x_{n-1}}\\
                                        x_{n}
                                        \end{array}\right].
\end{align*}
Now we run the polynomial positivity algorithm described in Section \ref{PosMethods} to
prove that this polynomial is positive. If the polynomial is positive when all variables
are positive then the equilibrium, $\xbar= 2$, is GAS for the original difference
equation by Theorem \ref{contrGAS}.

First we will prove that $P>0$ in the region $NE$. We make the polynomial $P_{NE}$ by
substituting $x_1= x_1+2$ and $x_2=x_2+2$ into $P$. See the Appendix in \cite{HoE2011}
for $P_{NE}$ and the rest of the polynomials as they will be omitted from this paper. Now
we need to prove that $P_{NE}>0$ in the region $\posOrthP$ except when all variables are
simultaneously zero. The only negative coefficient is on the term $x_1 x_2$, so we can
use the discriminant method. The binary quadratic form that we must show is positive
definite is
\begin{align*}
349366689 x_1^2-6980904 x_1 x_2+318700575 x_2^2.
\end{align*}
Its discriminant is $d= 445324725659927484$ which is positive, so by \subPoly $P_{NE}>0$
in $\posOrthP$.

Now we will prove $P>0$ in the region $NW$. Create the polynomial $P_{NW}$ by
substituting $x_1 = 1/x_1$, multiplying by $x_1^{d_{x_1}}= x_1^8$, and then translating
$x_1$ by $1/2$ to the left, and $x_2$ by $2$ to the left. All coefficients in $P_{NW}$
are positive, and the constant term is zero. There is no proper subset of the variables
for which setting them all equal zero yields the zero polynomial. Therefore, $P_{NW}$ is
zero only when all variables are zero, and so $P>0$ in $NW$.

Next, we will prove $P>0$ in the region $SE$. First make the polynomial $P_{SE}$ by
substituting $x_2= 1/x_2$, multiplying by $x_2^{d_{x_2}}= x_2^{10}$, and then translating
$x_1$ by $2$ to the left and $x_2$ by $1/2$ to the left. Now we need to prove that
$P_{SE}>0$ in the region $\posOrthP$ except when all variables are simultaneously zero.
All coefficients are positive, and the constant term is zero. There is no proper subset
of the variables for which setting them all equal zero yields the zero polynomial.
Therefore, $P_{SE}$ is zero only when all variables are zero, and then $P>0$ in the
region $SE$.

Finally, we must prove $P>0$ in the region $SW$. Make the polynomial $P_{SW}$ by
substituting $x_1= 1/x_1$ and $x_2= 1/x_2$, multiplying by $x_1^{d_{x_1}}= x_1^8$ and
$x_2^{d_{x_2}}= x_2^{10}$, and then translating both variables by $1/2$ to the left. Now
we need to prove that $P_{SW}>0$ in the region $\posOrthP$ except when all variables are
simultaneously zero. As in the region $NE$ the term $x_1 x_2$ has a negative coefficient
(and that is the only such coefficient), so we will use the discriminant method again.
The binary quadratic form that must be positive is
\begin{align*}
\frac{349366689}{16384} x_1^2-\frac{872613}{2048} x_1 x_2+\frac{318700575}{16384} x_2^2
\end{align*}
The discriminant is $d= \frac{111331181414981871}{67108864}$, which is positive. Then, by
\subPoly\!\!\!, $P_{SW}$ is positive in $\posOrthP$, so $P>0$ in the region $SW$.

Since $P>0$ in all four regions, $NE$, $NW$, $SE$, and $SW$, the $K$ value 5 is proven to
work for the rational difference equation $x_{n+1}= \frac{4+x_{n}}{1+x_{n-1}}$
\end{proof}

We can now see that the algorithm is indeed applicable. However, it wouldn't be possible
without programming the algorithm. For large $K$ values, even $K\geq 3$, the polynomials
are near impossible to deal with by hand. For this reason there is a maple package,
described below in Section \ref{MapleCodeGAS}.

\section{Results}\label{GASresults}

In this section we present the results that our algorithm can prove in full generality.
So far we have considered rational difference equations with specific numerical
coefficients. In this section, we consider the case where the coefficients are additional
variables, which are required to be positive. So the polynomial that we create is now a
polynomial in the variables $x_n, x_{n-1},\ldots,x_{n-k}$ as well as all of the
coefficient variables. We must point out that our algorithm will only apply when the
equilibrium can be expressed as a rational function of the coefficient variables. For the
proofs of the results found in this table see \cite{HoE2011}.

The equation numbers given in the following table match up with those in
\cite{CaELaG2008}, however the difference equations themselves may look different. The
parameters presented here are to guarantee that the equilibria will be rational functions
in the parameters.
\renewcommand{\arraystretch}{1.25}
\begin{longtable}{c|c|c|c}
Eqn \#    & $x_{n+1} =$                                                & Parameter Values          & Findings\\
\hline
\hline  2 & $\frac{M^2}{x_n}$                                          & $M\in \R$                 &$\xbar = |M|$ is not LAS\\
\hline  3 & $\frac{M^2}{x_{n-1}}$                                      & $M \in \R$ &$\xbar = |M|$ is not LAS\\
\hline  5 & \multirow{2}{*}{$\beta x_n$}                               & $0 \leq \beta < 1$        &$\xbar = 0$ is GAS\\
          &                                                            & $1 \leq \beta$            &$\xbar = 0$ is not LAS\\
\hline  9 & \multirow{2}{*}{$\gamma x_{n-1}$}                          & $0 \leq \gamma < 1$       & $\xbar = 0$ is GAS\\
          &                                                            & $1 \leq \gamma$           & $\xbar = 0$ is not LAS\\
\hline 17 & \multirow{2}{*}{$\frac{1}{4}\frac{M^2-1}{1+x_n}$}          & $M-1 > 0, M+1>0$          & $\xbar = \frac{1}{2}(M-1)$ is GAS\\
          &                                                            & $M-1 < 0, M+1<0$          & $\xbar = -\frac{1}{2}(M+1)$ is GAS\\
\hline 23 & \multirow{2}{*}{$\frac{\beta x_n}{1+x_n}$}                 & $0 < \beta \leq 1$        & $\xbar = 0$ is GAS\\
          &                                                            & $1 < \beta$               & $\xbar = \beta-1$ is GAS\\
\hline 29 & \multirow{2}{*}{$\frac{x_{n-1}}{A+x_n}$}                   & $0 < A < 1$               & $\xbar = 1-A$ is not LAS\\
          &                                                            & $1 < A$                   & $\xbar = 0$ is GAS\\
\hline 30 & \multirow{2}{*}{$\frac{x_{n-1}}{A+x_{n-1}}$} & $0 < A < 1$ & $\xbar = 1-A$ is GAS\\
          &                                                            & $1 < A$                   & $\xbar = 0$ is GAS\\
\hline 41 & $\alpha + \beta x_n$                                       & $0 \leq \beta < 1$        & $\xbar = \frac{\alpha}{1-\beta}$ is GAS \\
\hline 42 & \multirow{2}{*}{$q+\frac{1}{4}\frac{M^2-q^2}{x_n}$}        & $M-q < 0, M+q<0, q > 0$   & $\xbar = -\frac{1}{2}(M-q)$ is GAS\\
          &                                                            & $M-q > 0, M+q>0, q > 0$   & $\xbar = \frac{1}{2}(M+q)$ is GAS\\
\hline 65 & \multirow{2}{*}{$\frac{1}{4}\frac{M^2-q^2+4x_n}{1+q+x_n}$} & $M-q > 0, M+q > 0, q > -1$& $\xbar = \frac{1}{2}(M-q)$ is GAS\\
          &                                                            & $M-q < 0, M+q < 0, q > -1$& $\xbar = -\frac{1}{2}(M+q)$ is GAS\\
\hline 109& $\frac{x_{n-1}}{A+B x_n + x_{n-1}}$                        & $1 < A$                   & $\xbar = 0$ is GAS\\
\end{longtable}

In addition to the results in the above table our algorithm can be used to prove GAS of
many rational difference equations in which the coefficients have specific numerical
values. See the Web Books on the web page that accompanies this paper
\[ \texttt{http://math.rutgers.edu/\textasciitilde eahogan/GAS.html} \]

\section{Maple Code}\label{MapleCodeGAS}
In addition to the Web Books, there is a Maple package to accompany this paper which can
also be found on the above web page. The three most useful procedures are ProveK, Prove,
and WebBook. ProveK will use our algorithm to show that a given $K$ value works to prove
that the unique equilibrium of a particular rational difference equation is GAS. Prove
utilizes ProveK to find the $K$ value for a particular rational difference equation up to
a given threshold. WebBook takes in a rational difference equation with variables for
coefficients and proves GAS for a specified number of random choices for the
coefficients. There is a Help function (type Help() to see a list of all procedures, and
Help($\langle$procedure name$\rangle$) to get help on a specific procedure). In the help
for each procedure, a sample is given for how to use it.

\section{Conclusion} In Sections \ref{GAStoPoly} and \ref{PosMethods} we have seen both
parts of our new GAS algorithm: first reducing the problem to proving that a polynomial
is positive, and then proving polynomial positivity. Putting the two together we now have
a completely algorithmic approach to proving GAS of a given rational difference equation.

\vspace{\itemsep}
\noindent\textbf{Inputs:} \\
\indent $R$ - rational function in $k+1$ variables\\
\indent $\xbar$ - equilibrium, solution to $\xbar=R(\xbar,\ldots,\xbar)$\\
\indent $MaxK$ - a maximum $K$ value to try

\vspace{\itemsep}
\noindent\textbf{Outputs:}\\
\indent \textbf{true} if $\xbar$ is proven to be GAS for $x_{n+1}=R(x_{n},x_{n-1},\ldots,x_{n-k})$\\
\indent \textbf{false} if $\xbar$ is not LAS for $x_{n+1}=R(x_{n},x_{n-1},\ldots,x_{n-k})$\\
\indent \textbf{FAIL} if $MaxK$ was not high enough.

\vspace{\itemsep}
\noindent\textbf{Procedure:}
\begin{enumerate}
\item\label{CheckLAS} Check local asymptotic stability using the linearized stability
    theorem. If not LAS then output \textbf{false}. If LAS then continue to Step
    \ref{ConjK}.
\item\label{ConjK} Conjecture a $K$ value that satisfies Theorem \ref{contrGAS} using
    the procedure outlined in Section \ref{GAStoPoly}.
\item\label{ProvePos} Apply the $n$-dimensional polynomial positivity algorithm
    outlined at the end of Section \ref{PosMethods}. If the conjectured $K$ value was
    proven to work, output \textbf{true}. If the conjectured $K$ value was proven
    \emph{not} to work ($P_K$ failed \lCoeff or \Const\!\!), or the algorithm reached
    a recursion limit, continue to Step \ref{incrK}.
\item\label{incrK} If $K<MaxK$, increment $K$ by 1 and return to Step \ref{ProvePos}.
    If $K\geq MaxK$ then output \textbf{FAIL}.
\end{enumerate}

This algorithm now gives us a completely automatic proof machine for global asymptotic
stability. As was mentioned in the introduction, this problem has historically not been
approached in any kind of systematic fashion. Many of the theorems found in
\cite{CaELaG2008, KuMLaG2001} for proving GAS, were developed as generalizations of
techniques used to prove GAS of specific difference equations. This meant that given a
particular difference equation, proving its equilibrium is GAS would amount to trying to
apply various known theorems. Or, one may have to create a new theorem just to prove GAS
of one particular rational difference equation. There may not have been a clear cut path
leading to the proof. We believe that our new algorithm can serve as that path. Of
course, given a difference equation that is known to be GAS, our algorithm may not always
be able to prove it. However, we believe that it is much more widely applicable than any
one previously known theorem guaranteeing global asymptotic stability.

\section{Acknowledgements}
This material is based upon work supported by the U.S. Department of Homeland Security
under Grant Award Number 2007-ST-104-000006.  The views and conclusions contained in this
document are those of the authors and should not be interpreted as necessarily
representing the official policies, either expressed or implied, of the U.S. Department
of Homeland Security.

\bibliographystyle{myBibs}
\bibliography{AllBibs}

\begin{thebibliography}{10}
\providecommand{\url}[1]{\texttt{#1}}
\providecommand{\urlprefix}{URL }
\expandafter\ifx\csname urlstyle\endcsname\relax
  \providecommand{\doi}[1]{doi:\discretionary{}{}{}#1}\else
  \providecommand{\doi}{doi:\discretionary{}{}{}\begingroup
  \urlstyle{rm}\Url}\fi

\bibitem{ArDCoGMcS1984}
D.~S. Arnon, G.~E. Collins, and S.~McCallum, Cylindrical algebraic
  decomposition {I}: The basic algorithm, \emph{SIAM Journal on Computing},
  \textbf{13}(4), (1984), 865--877.

\bibitem{CaELaG2008}
E.~Camouzis and G.~Ladas, \emph{Dynamics of Third Order Rational Difference
  Equations}, Chapman and Hall/CRC press (2008).

\bibitem{ElS2000}
S.~Elayadi, \emph{Introduction to Difference Equations}, Springer (2000).

\bibitem{HaJKoH1991}
J.~Hale and H.~Kocak, \emph{Dynamics \& Bifurcations}, Springer-Verlag (1991).

\bibitem{HoE2011}
E.~Hogan, \emph{Experimental methods applied to the study of non-linear
  recurrences}, Ph.D. thesis, Rutgers, The State University of New Jersey
  (2011).

\bibitem{HoHDaJ1998}
H.~Hong and D.~Jaku\u{s}, Testing positiveness of polynomials, \emph{Journal of
  Automatic Reasoning}, \textbf{21}(1), (1998), 23--38.

\bibitem{KoVLaG1993}
V.~Kocic and G.~Ladas, \emph{Global Behavior of Nonlinear Difference Equations
  of Higher Order with Applications}, Kluwer Academic Publishers (1993).

\bibitem{KrNNeT1999}
N.~Kruse and T.~Nesemann, Global asymptotic stability in some discrete
  dynamical systems, \emph{Journal of Mathematical Analysis and Applications},
  \textbf{235}, (1999), 151--158.

\bibitem{KuMLaG2001}
M.~R.~S. Kulenovi\'{c} and G.~Ladas, \emph{Dynamics of Second Order Rational
  Difference Equations}, Chapman and Hall/CRC press (2001).

\bibitem{La2004}
T.~Lam, \emph{Introduction to Quadratic Forms over Fields}, vol.~67 of
  \emph{Graduate Studies in Mathematics}, American Mathematical Society (2004).

\bibitem{MaM1999}
M.~Martelli, \emph{Introduction to Discrete Dynamical Systems}, John Wiley \&
  Sons (1999).

\bibitem{OMeO1971}
O.~T. O'Meara, \emph{Introduction to Quadratic Forms}, Springer-Verlag (1971).

\end{thebibliography}

\end{document}